\newcommand{\nc}{\newcommand}
\newtheorem{thm}{Theorem}[section]
\newtheorem{corollary}[thm]{Corollary}
\newtheorem{lemma}[thm]{Lemma}
\newcommand{\beq}{\begin{equation}}
\newcommand{\eeq}{\end{equation}}
\newcommand{\bcl}{\begin{center}}
\newcommand{\ecl}{\end{center}}
\newcommand{\re}{\mathbb{R}}
\newcommand{\La}{\triangle}
\newcommand{\D}{\nabla}
\newcommand{\R}{{\mathbb R}}
\newcommand{\vp}{\varphi}
\newcommand{\oa}{\overline{\alpha}}
\newcommand{\al}{\alpha}
\newcommand{\be}{\beta}
\newcommand{\bs}{\backslash}
\nc \sms {\smallskip}
\nc{\fp}{\noindent}
\nc{\qd}{\qquad\qquad}
\title{Geometric Properties of  Gelfand's  Problems with Parabolic Approach}
\author[Sunghoon Kim]{Sunghoon Kim}
\address{Sunghoon Kim:
Department of Mathematics, School of Natural Sciences, The Catholic University of Korea,
43 Jibong-ro, Wonmi-gu, Bucheon-si(city), Gyeonggi-do, 420-743, Republic of Korea}
\email{math.s.kim@catholic.ac.kr}
\author[Ki-ahm Lee]{Ki-ahm Lee}
\address{Ki-ahm Lee:
School of Mathematical Sciences,
Seoul National University,
San56-1 Shinrim-dong Kwanak-gu Seoul 151-747,South Korea
}
\email{kiahm@math.snu.ac.kr}
\keywords{Porous medium equation,
large time behavior, eventual concavity, convergence of supports.Gelfand.}
\subjclass{Primary 35K55, 35K65}
\begin{document}

\maketitle
\begin{abstract}

\noindent  We consider the asymptotic profiles of the nonlinear
parabolic flows 
$$(e^{u})_{t}= \La u+\lambda e^u$$ 
to show the geometric properties of minimal solutions of the following elliptic nonlinear eigenvalue problems known as a Gelfand's problem:
\begin{equation*}
\begin{split}
\La \vp &+ \lambda e^{\vp}=0, \quad \vp>0\quad\text{in $\Omega$}\\
\vp&=0\quad\text{on $\Omega$}
\end{split}
 \end{equation*}
posed in a strictly convex domain $\Omega\subset\re^n$. In this work, we show that there is a strictly increasing function $f(s)$ such that $f^{-1}(\vp(x))$ is convex for $0<\lambda\leq\lambda^{\ast}$, i.e., we prove that level set of $\vp$ is convex. Moreover, we also present the boundary condition of $\vp$ which guarantee the $f$-convexity of solution $\vp$.
\end{abstract}


\setcounter{equation}{0}
\setcounter{section}{0}

\section{Introduction}\label{sec-intro}
\setcounter{equation}{0}
\setcounter{thm}{0}
We will investigate the geometric properties of parabolic flows and derive related  geometric properties for the asymptotic limits of such evolutions. More precisely, we consider the nonnegative solutions $u(x,t)$ of the following equation
\begin{equation}\label{eq-main-for-u-3}
\left(e^u\right)_t=\La u+\lambda e^u
\end{equation}
posed on a strictly convex and bounded domain $\Omega$ with zero boundary condition
\begin{equation}\label{eq-zero-boundary-condition-134533}
u=0 \qquad \qquad \mbox{on $\partial\Omega$}, 
\end{equation}
and initial data 
\begin{equation}\label{eq-initial-condition-134533}
u(x,0)=u_0(x)>0
\end{equation}
In the limit, these flows converge to solutions of the well-known Gelfand's problems 
\begin{equation}\label{eq-main-1}
\begin{cases}
\La \vp(x) =-\lambda e^{ \vp(x)}\quad\text{in $\Omega$}\\
\vp(x)=0\quad\text{on $\partial\Omega$.}
\end{cases}
\tag{{\bf $GP_{\lambda}$}}
\end{equation}
By this relation, it is natural to expect that the solution of the parabolic flow above have a lot in common with those of the Gelfand's problems, \eqref{eq-main-1}. The aim of the paper is to provide geometric properties of solutions of the Gelfand's problems by using the parabolic method which is introduced by Lee and V\'azquez, \cite{LV}.\\
\indent  Recently, there has been a lot of studies of the problem \eqref{eq-main-1} because of its wide applications. It arises in many physical models: it describes problems of thermal self-ignition \cite{Ge}, a ball of isothermal gas in gravitational equilibrium proposed by lord Kelvin \cite{Ch}, the problem of temperature distribution in an object heated by the application of a uniform electric current \cite{KC} and Osanger's vortex model for turbulent Euler flows \cite{CLMP}.

Let us summarize some known results to \eqref{eq-main-1}. It is well-known  that there exists a finite positive number $\lambda^{\ast}$, called the {\it extremal value}, such that Gelfand's problem \eqref{eq-main-1} has at least a classical solution which is minimal among all possible positive solutions if $0\leq\lambda<\lambda^{\ast}$, while no solution exists, even in the weak sense, for $\lambda>\lambda^{\ast}$. Let us call the minimal solution $\underline{\vp}_{\lambda}$. The family of such solutions depends smoothly and monotonically on $\lambda$, and in particular
\begin{equation*}
\underline{\vp}_{\lambda}<\underline{\vp}_{\lambda'} \qquad \mbox{if $\lambda<\lambda'$}.
\end{equation*}
Their limit as $\lambda\nearrow\lambda^{\ast}$ is the external solution $\vp_{\lambda^{\ast}}$ and it can be either classical or singular (i.e. unbounded). It is known that $\vp^{\ast}\in L^{\infty}(\Omega)$ for every $\Omega$ if $n\leq 9$, while $\vp^{\ast}=\log\left(\frac{1}{|x|^2}\right)$ and $\lambda^{\ast}=2(n-2)$ if $n\geq 10$ and $\Omega=B_1$. Brezis and V\'azquez \cite{BV} investigated the existence and regularities of extremal solutions when they are unbounded. The regularity theory for the Gelfand's problem was improved by S. Nedev \cite{Ne} who  proved that, for general smooth domain $\Omega$, $\vp^{\ast}$ is a classical solution if $n\leq 3$, while $\vp^{\ast}\in H^{1}_0(\Omega)$ if $n\leq 5$. \\
\indent The ultimate goal in this article is to establish the geometric properties of $\underline{\vp}_{\lambda}$ for $\lambda<\lambda^{\ast}$. Especially, we'd like to show that
\begin{equation*}
f(\underline{\vp}_{\lambda})=e^{-\frac{1}{2}\underline{\vp}_{\lambda}}\,\, : \quad \mbox{convex}.
\end{equation*}     
From now on, we call it $f$-convexity shortly. We also refer to the minimal solution $\underline{\vp}_{\lambda}$ of the Gelfand's problem \eqref{eq-main-1} as $\vp$ for the rest of the paper. Since the minimal solution $\vp$ can be obtained as the limit of solution $u$ to \eqref{eq-main-for-u-3}-\eqref{eq-initial-condition-134533} as $t\to\infty$, we will concentrate on showing $f$-convexity of $u$ under the assumption that the initial value $u_0$ has the following property
\begin{equation*}
f(u_0)=e^{-\frac{1}{2}u_0}\,\, : \quad \mbox{strictly convex}.
\end{equation*}
\indent Large number of literatures on the convexity properties of the solutions of semilinear elliptic equations can be found. We refer the reader to the papers of Caffarelli and Friedmann \cite{CF} and  Korevaar \cite{Ko1}, \cite{Ko2} for the geometric results which are related with our work.\\ 
\indent The parabolic approximation method introduced in \cite{LV} relies on the fact that the nonlinear elliptic problem \eqref{eq-main-1} can be described the asymptotic profile of a corresponding parabolic flow in a bounded domain and we use that possibility as follows: we select an initial data for the parabolic flow having the desired geometric property. Then, the corresponding solution $u$ of \eqref{eq-main-for-u-3} will converge eventually to the minimal solution $\vp$ as $t\to\infty$. If the evolution preserves the $f$-convexity property under investigation, the result for the problem \eqref{eq-main-1} will be obtained in the limit $t\to\infty$.\\
\indent To investigate the $f$-convexity of solutions to the corresponding parabolic flow, we will split it into two steps. The step 1 will be devoted to the study of the $f$-convexity of solution $u$ on the boundary. On $\partial\Omega$, the second derivatives of $f(u)$ can be written in the form
\begin{equation*}
\left[f(u)\right]_{\alpha\alpha}=\frac{1}{2}e^{-\frac{1}{2}u}\left(\frac{1}{2}u_{\alpha}^2-u_{\alpha\alpha}\right), \qquad \left(D_{e_{\alpha}}u=u_{\alpha}\right).
\end{equation*}
Hence, the geometric properties of solutions on the boundary can be determined by the balances between quantities, $u_{\alpha}$ and $u_{\alpha\alpha}$. However, the difference between them is very subtle in this problem. Therefore, there is little room for perturbing quantities. Thus, it is very difficult to investigate the geometric properties of solution on the boundary without some boundary condition. In this paper, we will focus on the solutions of \eqref{eq-main-for-u-3} having the conditions not only \eqref{eq-zero-boundary-condition-134533}, \eqref{eq-initial-condition-134533} but also
\begin{equation}\label{boundary-condition-of-gelfand-with-PME}
G(u,\lambda,\Omega)=\frac{1}{2}u_{\nu}^2+\lambda+(n-1)u_{\nu}H(\partial\Omega)+Ku_{\nu}>0 \qquad \mbox{on $\partial\Omega$} 
\end{equation}
for sufficiently large $K>0$ where $\nu$ is the outer normal vector to $\partial\Omega$ and $H(\partial\Omega)$ is the mean curvature of $\Omega$ at the boundary. Here the constant $K$ is related to the shape of boundary $\partial\Omega$.\\
\indent As the second step, we extend the geometric result on the boundary to the interior of $\Omega$. Since the continuity of the second derivatives of $f(u)$, it is expected that if there are some problems or difficulties then they may occur at the region far away from the boundary. Hence, the equation that describe the parabolic flow plays a very important mathematical role in the study of the geometric properties of solution in the interior.
\subsection{Outlines}
This paper is divided into three parts: In Part 1 (Section 2) we study the convexity of solution to the degenerate equation
\begin{equation}\label{eq-degenerated-equation-in-outlines}
w_t=\rho(w)a^{ij}(\D w)w_{ij}+b_1(w)|\D w|^p+b_2(w)|\D w|^2+b_3(w) 
\end{equation}
on a strictly convex domain $\Omega$. It is a simple observation that the convexity of solution will be strongly effected by the coefficients. Hence, proper conditions need to be imposed to the coefficients of \eqref{eq-degenerated-equation-in-outlines} for the result. The Part 2 (Section 3) is devoted to the proof for the $f$-convexity of the minimal solution to the Gelfand's problem with boundary condition \eqref{boundary-condition-of-gelfand-with-PME}. As will be mentioned later, the main equation of Gelfand's problem is a special form of \eqref{eq-degenerated-equation-in-outlines}. Thus, the proof will be focused on the $f$-convexity of solution on the boundary. In Part 3, we will discuss the boundary condition \eqref{boundary-condition-of-gelfand-with-PME} of a solution 

\section{Convexity for degenerate equation}\label{Section-Convexity-for-degenerate-equation}

\setcounter{equation}{0}
\setcounter{thm}{0}

In this section, we will study degenerated equations of the form
\begin{equation}\label{eq-w}
w_t=\rho(w)a^{ij}(\D w)w_{ij}+b_1(w)|\D w|^p+b_2(w)|\D w|^2+b_3(w), \qquad \left(p\geq 2\right)
\end{equation}
on the bounded cylinder $\Omega\times[0,\infty)$, where $\Omega$ is a bounded strictly convex domain in $\R^n$ with smooth boundary. 
The subindices $i,j\in\{1,\cdots,n\}$ denote differentiation with respect to the space variables $x_1$, $\cdots$, $x_n$ and the summation convention is used. We assume that the coefficient matrix $\left(a^{ij}\right)$ is strictly positive and all coefficients $a^{ij}$, $b$, $c$ and $d$ belong to appropriate $C^k$, $(k=1,\cdots)$ space which will be defined later. The degeneracy of the equation is carried through the function $\rho(w)$ which is assumed to be smooth on $\Omega$. \\
We assume further that the coefficients of \eqref{eq-w} satisfies the following conditions:
\begin{equation*}
\begin{aligned}
&{\bf I.1} \qquad \qquad \qquad \qquad b_1(s),\,\, b_2(s)\,\, \mbox{and}\,\, b_3(s) \quad \mbox{are convex,} \qquad \qquad \qquad \qquad\\
&{\bf I.2} \qquad \qquad \qquad \qquad \qquad \rho''-\frac{\left(\rho'\right)^2}{2\rho}=0.\qquad \qquad \qquad \qquad
\end{aligned}
\end{equation*}
Denoting by $L$ the operator
\begin{equation*}
Lw=w_t-\left(\rho(w)a^{ij}(\D w)w_{ij}+b_1(w)|\D w|^p+b_2(w)|\D w|^2+b_3(w)\right), \qquad \left(p\geq 2\right), 
\end{equation*}
we can now state the main result in this section:
\begin{lemma}\label{lemma-convexity-for-the-general-case-0}
Let $\Omega$ be a strictly convex bounded domain in $\R^n$ with smooth boundary and  suppose that the coefficients $a^{ij}$, $b$, $c$ and $d$ of the operator $L$ are smooth and satisfy the elliptic condition
\begin{equation*}
a^{ij}\xi_i\xi_j\geq c_0\left|\xi\right|^2>0 \qquad \forall\xi\in\R^n\bs\{0\}
\end{equation*}
for some positive constant $c_0$. In addition, assume that $\rho$ is a smooth function in $\R$ and strictly positive on $\R\bs\{0\}$. Let $w$ be a positive smooth solution of \eqref{eq-w} satisfying the conditions conditions {\bf I.1-2}. If $w$ is convex on the parabolic boundary of $\Omega\times[0,\infty)$, i.e., if $\inf_{\partial_l \Omega\times[0,\infty)}\inf_{|e_{\al}|=1}w_{\al\al}(x,t)>0$ and $\inf_{\Omega}\inf_{|e_{\al}|=1}w_{\al\al}(x,0)>0$, then $w$ is convex in the space variable for all $t>0$, i.e., $\inf_{\Omega\times[0,\infty)}\inf_{|e_{\al}|=1}w_{\al\al}(x,t)\geq 0$.
\end{lemma}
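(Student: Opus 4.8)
The plan is to run a parabolic maximum principle on the smallest eigenvalue of the spatial Hessian of $w$ and to argue by contradiction at the first instant convexity could be lost. Fix a horizon $T>0$ and set $C(x,t)=\inf_{|e|=1}w_{ee}(x,t)$, the least eigenvalue of $D^2 w$. By hypothesis $C>0$ on the parabolic boundary of $\Omega\times[0,T]$, so if the conclusion failed there would be a first time $t_0\in(0,T]$ and, since the boundary data are strictly convex, an interior point $x_0\in\Omega$ with $C(x_0,t_0)=0$ and $w_{ee}\ge0$ for all $t\le t_0$; a routine regularization (adding a small strictly convex term that grows in $t$ and letting it tend to $0$) reduces matters to deriving a strict sign contradiction at such a touching point. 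Rotating coordinates I may assume $D^2 w(x_0,t_0)$ is diagonal with $e_1$ the null direction, so that $w_{11}(x_0,t_0)=0$, $w_{1k}(x_0,t_0)=0$ for all $k$, and $x_0$ is an interior spatial minimum of $w_{11}(\cdot,t_0)$ attained in time from above. This yields the critical-point relations $(w_{11})_t\le 0$, $\nabla w_{11}=0$ (i.e. $w_{11k}=0$), and $D^2 w_{11}\ge 0$ at $(x_0,t_0)$.

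The heart of the argument is to differentiate \eqref{eq-w} twice in the $e_1$ direction, obtaining an evolution equation for $w_{11}$, and to evaluate it at $(x_0,t_0)$ using these relations. Three features should make the right-hand side nonnegative. First, because $w_{1k}=0$ and $w_{11k}=0$ at the point, every first derivative of $a^{ij}(\D w)$ drops out ($\partial_1 a^{ij}=\tfrac{\p a^{ij}}{\p p_l}w_{l1}=0$) and the leading diffusion contributes exactly $\rho(w)\,a^{ij}(w_{11})_{ij}\ge 0$, since $\rho>0$, $(a^{ij})$ is positive definite and $D^2 w_{11}\ge 0$. Second, differentiating the lower-order terms $b_1|\D w|^p+b_2|\D w|^2+b_3$ produces, after the pieces proportional to $w_{11}$ and to $w_{1k}$ vanish at the point, only $(b_1''|\D w|^p+b_2''|\D w|^2+b_3'')w_1^2$, which is nonnegative precisely because of the convexity assumption \textbf{I.1}. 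Third, the genuinely degenerate terms — those carrying $\rho'$ and $\rho''$, which are absent for a nondegenerate equation — include an indefinite third-order cross term of the form $\rho' w_1\,a^{ij}w_{1ij}$ together with $\rho''w_1^2(a^{ij}w_{ij})$; assumption \textbf{I.2}, equivalently $(\sqrt{\rho})''=0$, is exactly the algebraic identity needed to combine these with the diffusion so that their sum carries the right sign.

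I expect the control of that last, indefinite cross term to be the main obstacle: unlike the $b$-terms and the diffusion, $\rho' w_1\,a^{ij}w_{1ij}$ has no a priori sign and cannot be removed by the first-order conditions alone, so the whole scheme depends on using \textbf{I.2} to rewrite the $\rho$-dependent part as a manifestly nonnegative expression, if necessary after substituting the once-differentiated form of \eqref{eq-w} to trade the offending third derivatives for controlled quantities. Once the right-hand side is shown to be $\ge 0$ (and strictly positive for the regularized problem), this contradicts $(w_{11})_t\le 0$ at the first touching point. Hence no such $(x_0,t_0)$ exists, $C\ge 0$ on $\Omega\times[0,T]$, and letting the regularization vanish and then $T\to\infty$ gives $\inf_{\Omega\times[0,\infty)}\inf_{|e_\al|=1}w_{\al\al}\ge 0$. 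Two technical points I would address en route are the non-smoothness of $C=\inf_e w_{ee}$ (handled by freezing the optimal direction $e_1$ and working with the smooth function $w_{11}$, since at the touching point the infimum over $e$ is attained there) and the degeneracy of $\rho$ (the computation is performed where $\rho>0$, with borderline cases treated by approximation).
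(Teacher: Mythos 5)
There is a genuine gap, and it sits exactly where you flagged it. Your framework (least eigenvalue of $D^2w$, first touching time, freezing the null direction $e_1$, the critical-point relations, the sign of the diffusion term via $D^2w_{11}\ge 0$, and the $b_i''$-terms via \textbf{I.1}) reproduces the outer shell of the paper's argument, but the problematic combination
\begin{equation*}
2\rho'(w)\,w_1\,a^{ij}w_{1ij}+\rho''(w)\,w_1^2\,a^{ij}w_{ij}
\end{equation*}
surviving at the touching point is not handled by either of the devices you suggest. Condition \textbf{I.2} is a pointwise identity between $\rho''$ and $(\rho')^2/\rho$; it relates nothing to the third derivative $a^{ij}w_{1ij}$, so it cannot by itself rewrite this combination as a nonnegative expression in a \emph{fixed} direction. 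Your fallback of substituting the once-differentiated equation fails concretely: at the touching point (where $w_{1k}=0$) that substitution gives $\rho\,a^{ij}w_{1ij}=w_{1t}-\rho'w_1a^{ij}w_{ij}-\left(b_1'|\nabla w|^p+b_2'|\nabla w|^2+b_3'\right)w_1$, so the combination becomes $\frac{2\rho'w_1}{\rho}w_{1t}+\left(\rho''-\frac{2(\rho')^2}{\rho}\right)w_1^2a^{ij}w_{ij}+\cdots$; under \textbf{I.2} one has $\rho''-\frac{2(\rho')^2}{\rho}=-\frac{3(\rho')^2}{2\rho}\le 0$, while $a^{ij}w_{ij}\ge 0$ at the touching point (the Hessian is positive semidefinite there and $(a^{ij})>0$), so this term comes out with the \emph{wrong} sign, and the term $\frac{2\rho'w_1}{\rho}w_{1t}$ involves $w_{1t}$, about which the touching-point conditions say nothing.

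The missing idea is the direction-perturbation device of Lee--V\'azquez (after Korevaar), which is the actual heart of the paper's proof and the only place where \textbf{I.2} does its work. Instead of the frozen direction $e_1$, one minimizes $Z=w_{,\alpha\beta}\eta^{\alpha}\eta^{\beta}+\epsilon\psi(t)|\eta|^2$ with a spatially varying direction field $\eta^{\beta}(x)=\delta_{\overline{\alpha}\beta}+c_{\overline{\alpha}}x^{\beta}+\frac{1}{2}c_{\overline{\alpha}}c_{\gamma}x^{\gamma}x^{\beta}$. Because $\eta$ depends on $x$, the second derivatives $Z_{,ij}$ contain cross terms $2c_{\alpha}w_{,\beta ij}\eta^{\alpha}\eta^{\beta}$, so when the evolution of $Z$ is assembled the dangerous third-order terms appear with the coefficient $\rho'w_{\alpha}-2\rho c_{\alpha}$; the choice $c_{\alpha}=\rho'w_{\alpha}/(2\rho)$ annihilates them, and precisely for this choice the leftover coefficient $\rho''w_{\alpha}w_{\beta}-2\rho c_{\alpha}c_{\beta}=\left(\rho''-\frac{(\rho')^2}{2\rho}\right)w_{\alpha}w_{\beta}$ vanishes by \textbf{I.2}. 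After this double cancellation every remaining term at the minimum is proportional to $\epsilon\psi$ or $(\epsilon\psi)^2$, with coefficients bounded by interior parabolic estimates on a compact subset away from $\partial\Omega$ (the boundary hypothesis keeps the touching point interior, where the equation is nondegenerate), and choosing $\psi$ with $\psi'>|K|\psi+|L|\psi^2$ yields the contradiction. Without this (or an equivalent) mechanism for trading the indefinite third-order term against a perturbation of the minimizing direction, your argument cannot close.
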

\begin{proof}
By direct computation, \eqref{eq-w} implies that the evolution of $w_{\alpha\beta}$ is given by the equation
\begin{equation}
\begin{split}
w_{,\alpha\beta t}=&\rho(w)a^{ij}(\D w)w_{\alpha\beta,ij}+\rho(w)a^{ij}_k(\D w)(w_{\alpha k} w_{\beta i j}+w_{\beta k} w_{\alpha ij})\\
                &+\rho'(w)a^{ij}(\D w)(w_{\alpha} w_{\beta i j}+w_{\beta} w_{\alpha ij})+\rho'(w)w_{\alpha}a^{ij}_kw_{k\beta}D_{ij}w+\rho'(w)w_{\beta}a^{ij}_kw_{k\alpha}w_{ij}\\
                &+\rho(w)a^{ij}_k(\D w)w_{k\al\be}w_{ij}+\rho(w)a^{ij}_{kl}(\D w)w_{k\al}w_{l\be}D_{ij} w\\
                &+\rho'(w)w_{\al\be}a^{ij}(\D w)D_{ij} w+\rho''(w)w_{\alpha}w_{\beta}a^{ij}(\D w)D_{ij} w+b_1'(w)|\nabla w|^pw_{\alpha\beta}\\
                &+ \left[pb_1(w)|\D w|^{p-2}+2b_2(w)\right]\left(\nabla w_{\alpha}\cdot\nabla w_{\beta}+\nabla w\cdot \nabla w_{\alpha\beta}\right)\\
                &+p(p-2)b_1(w)w_kw_lw_{k\al}w_{l\be}\left|\nabla w\right|^{p-4}+\left[pb_1'(w)|\D w|^{p-2}+2b_2'(w)\right](w_{\alpha}\D w\cdot\D w_{\beta}+w_{\beta}\D w\cdot\D w_{\alpha})\\
                &+b_1''(w)|\D w|^{p-2}w_{\alpha}w_{\beta}+b_2'(w)w_{\alpha\beta}|\D w|^2+b_2''(w)w_{\alpha}w_{\beta}|\D w|^2+b_3'(w)w_{\alpha\beta}+b_3''(w)w_{\alpha}w_{\beta}
\end{split}
\end{equation}
To estimate the minimum of the second derivatives of $w$ with respect to space variables, we take a look at the following quantity, for a positive function $\psi(t)$,
\begin{equation*}
\inf_{x\in\Omega, s\in[0,t]}\inf_{e_{\beta}\in\R^n, |e_{\beta}|=1}\left[w_{\beta\beta}+\epsilon\psi(t)\right]=w_{\overline{\alpha}\overline{\alpha}}(x_0,t_0)+\epsilon\psi(t_0)
\end{equation*}
We need to show that there are a function $\psi(t)$ and $\epsilon_0$ such that
\begin{equation*}
w_{\overline{\alpha}\overline{\alpha}}(x_0,t_0)+\epsilon\psi(t_0)> 0 \qquad \forall 0<\epsilon<\epsilon_0.
\end{equation*}
To get a contradiction, suppose that 
\begin{equation}\label{eq-assumption-for-t-0-with-zero-level}
w_{\overline{\alpha}\overline{\alpha}}(x_0,t_0)+\epsilon\psi(t_0)=0.
\end{equation} 
Observe that the minimum is taken at $(x_0,t_0)$ with direction $\overline{\alpha}$. By the assumption that $w$ is convex on the parabolic boundary of $\Omega\times[0,\infty)$, the minimum point of $w_{\alpha\alpha}$ is little way off the boundary. Hence, we can put $x_0$ without loss of generality. On the other hand, the parabolic equation $w_{\alpha\alpha}$ contains third order derivatives which is difficult to be controlled by the information of $w_{\alpha\alpha}$. Hence we are going to perturb the direction of the derivative to create extra terms, keeping the minimum point and minimum zero. For the perturbation, we take the function which is introduced in \cite{LV}.\\
\indent We now use the function
\begin{equation*}
Z=w_{,\al\be}\eta^{\al} \eta^{\be}+\epsilon\psi(t)|\eta|^2
\end{equation*}
where the modifying functions $\eta^{\beta}(x)$ are constructed as follows: at
$x=0$ we assume that the $\eta^\beta$ satisfy the system
\begin{equation}
\eta^{\be}_{,i}=(c_{\gamma}\eta^{\gamma})\delta_{\be i}, \qquad
\eta^{\be}_{,ij}=(c_{l}\eta^{l})c_{\gamma}\delta_{\gamma
\,i}\delta_{\be j},
\label{eq-eta}
\end{equation}
where the subscripts are space derivatives. Putting also
$\eta^{\beta}(0)=\delta_{\oa\be}$, it  follows that
\begin{equation*}
\eta^{\be}_{,i}(0)=c_{\gamma}\delta_{\oa,\gamma}\delta_{\be i}, \qquad
\eta^{\be}_{,ij}(0)=c_{\oa}c_{\gamma}\delta_{\gamma i}\delta_{\be j},
\label{eq-eta2}
\end{equation*}
and
$$
\eta^{\beta}(x)=\delta_{\oa\be}+c_{\oa}x^{\be}+\frac12
c_{\oa}c_{\gamma}x^{\gamma}x^{\be}\,.
$$
Hence, at $x=0$,
\begin{equation*}
\begin{aligned}
Z_{,i}&=w_{,\al\be i}\eta^{\al}\eta^{\be}+2w_{,\be
i}c_{\al}\eta^{\al}\eta^{\be}+\epsilon\psi(t)\left(|\eta|^2\right)_i\\
Z_{,ij}&=w_{,\al\be ij}\eta^{\al}\eta^{\be}+2c_{\alpha}w_{,\be
ij}\eta^{\al}\eta^{\be}+2c_{\beta}w_{,\alpha ij}\eta^{\al}\eta^{\be}\\
&\qquad \quad +2c_jc_{\al}w_{,\be i}\eta^{\al}\eta^{\be}+2c_ic_{\al}w_{,\be j}\eta^{\al}\eta^{\be}+2c_{\al}c_{\be}w_{,ij}\eta^{\al}\eta^{\be}+\epsilon\psi(t)\left(|\eta|^2\right)_{ij}.
\end{aligned}
\end{equation*}
\begin{equation}\label{eq-w-1}
\begin{aligned}
Z_{,t}-\epsilon\psi'|\eta|^2=&\rho a^{ij}D_{ij} Z+\rho a^{ij}_k\left[w_{\alpha k} w_{\beta i j}\eta^{\al}\eta^{\be}+w_{\beta k} w_{\alpha ij}\eta^{\al}\eta^{\be}\right]\\
                &+a^{ij}\left[\rho' w_{\alpha}-2\rho c_{\alpha}\right]w_{\beta i j}\eta^{\al}\eta^{\be}+a^{ij}\left[\rho' w_{\beta}-2\rho c_{\beta}\right]w_{\alpha ij}\eta^{\al}\eta^{\be}\\
                &+a^{ij}\left[\rho'' w_{\alpha}w_{\beta}-2\rho c_{\alpha}c_{\beta}\right]w_{ij}\eta^{\al}\eta^{\be}-2\rho a^{ij}c_jc_{\al}w_{,\be i}\eta^{\al}\eta^{\be}-2\rho a^{ij}c_ic_{\al}w_{,\be j}\eta^{\al}\eta^{\be}\\
                &-\epsilon\psi a^{ij}\left(|\eta|^2\right)_{ij}+\rho a^{ij}_k w_{k\al\be}w_{ij}\eta^{\al}\eta^{\be}+\rho a_{ij}^{kl}w_{k\al}w_{l\be}w_{ij}\eta^{\al}\eta^{\be}\\
                &+\rho'w_{\al\be}a^{ij}w_{ij}\eta^{\al}\eta^{\be}+\rho' a_k^{ij}w_{\alpha}w_{k\beta}w_{ij}\eta^{\al}\eta^{\be}+\rho' a_k^{ij}w_{\beta}w_{k\alpha}w_{ij}\eta^{\al}\eta^{\be}\\
                &+ \left[pb_1(w)|\D w|^{p-2}+2b_2(w)\right](\nabla w\cdot \nabla Z)+p|\nabla w|^{p-2}\left[\left(b_1'(w)w_{\alpha}-c_{\alpha}b_1(w)\right)\D w\cdot\D w_{\beta}\eta^{\al}\eta^{\be}\right]\\
                &+p|\nabla w|^{p-2}\left[\left(b_1'(w)w_{\beta}-c_{\beta}b_1(w)\right)\D w\cdot\D w_{\alpha}\eta^{\al}\eta^{\be}\right]+\left[\left(b_2'(w)w_{\alpha}-c_{\alpha}b_2(w)\right)\D w\cdot\D w_{\beta}\eta^{\al}\eta^{\be}\right]\\
                &+\left[\left(b_2'(w)w_{\beta}-c_{\beta}b_2(w)\right)\D w\cdot\D w_{\alpha}\eta^{\al}\eta^{\be}\right]-\epsilon\psi\left[pb_1(w)|\D w|^{p-2}+2b_2(w)\right](\nabla w\cdot \nabla |\eta|^2)\\
                &\left[pb_1(w)|\D w|^{p-2}+2b_2(w)\right]\left(\nabla w_{\alpha}\cdot \nabla w_{\beta}\right)\eta^{\al}\eta^{\be}+p(p-2)b_1(w)|\nabla w|^{p-4}w_kw_lw_{k\al}w_{l\be}\eta^{\al}\eta^{\be}\\
                &+\left[b_1''(w)|\D w|^{p-2}+b_2''(w)|\D w|^2+b_3''(w)\right]w_{\alpha}w_{\beta}\eta^{\al}\eta^{\be}+\left[b_1'(w)|\nabla w|^{p}+b_2'(w)|\nabla w|^2+b_3'(w)\right]w_{\alpha\beta}\eta^{\al}\eta^{\be}.
\end{aligned}
\end{equation}
We are now going to choose $c_{\alpha}$ such that 
\begin{equation}\label{condition-of-c-alpha-with-rho-b-c}
\rho'w_{\alpha}-2c_{\alpha}\rho=0
\end{equation}
at $(x_0,t_0)$. Then, by the \textbf{conditions I} and \eqref{eq-w-1},
\begin{equation}\label{eq-w-1-after-first-calculation}
\begin{aligned}
Z_{,t}-\epsilon\psi'|\eta|^2\geq &\rho a^{ij}D_{ij} Z+\rho a^{ij}_k\left[w_{\alpha k} w_{\beta i j}\eta^{\al}\eta^{\be}+w_{\beta k} w_{\alpha ij}\eta^{\al}\eta^{\be}\right]\\
                &-2\rho a^{ij}c_jc_{\al}w_{,\be i}\eta^{\al}\eta^{\be}-2\rho a^{ij}c_ic_{\al}w_{,\be j}\eta^{\al}\eta^{\be}\\
                &-\epsilon\psi a^{ij}\left(|\eta|^2\right)_{ij}+\rho a^{ij}_kw_{ij}\left[Z_k-2c_{\alpha}w_{\beta k}\eta^{\al}\eta^{\be}-\epsilon\psi\left(|\eta|^2\right)_k\right]+\rho a^{ij}_{kl}w_{k\al}w_{l\be}w_{ij}\eta^{\al}\eta^{\be}\\
                &+\rho'w_{\al\be}a^{ij}w_{ij}\eta^{\al}\eta^{\be}+\rho' a_k^{ij}w_{\alpha}w_{k\beta}w_{ij}\eta^{\al}\eta^{\be}+\rho' a^k_{ij}w_{\beta}w_{k\alpha}w_{ij}\eta^{\al}\eta^{\be}\\
                &+ \left[pb_1(w)|\D w|^{p-2}+2b_2(w)\right](\nabla w\cdot \nabla Z)+p|\nabla w|^{p-2}\left[\left(b_1'(w)w_{\alpha}-c_{\alpha}b_1(w)\right)\D w\cdot\D w_{\beta}\eta^{\al}\eta^{\be}\right]\\
                &+p|\nabla w|^{p-2}\left[\left(b_1'(w)w_{\beta}-c_{\beta}b_1(w)\right)\D w\cdot\D w_{\alpha}\eta^{\al}\eta^{\be}\right]+\left[\left(b_2'(w)w_{\alpha}-c_{\alpha}b_2(w)\right)\D w\cdot\D w_{\beta}\eta^{\al}\eta^{\be}\right]\\
                &+\left[\left(b_2'(w)w_{\beta}-c_{\beta}b_2(w)\right)\D w\cdot\D w_{\alpha}\eta^{\al}\eta^{\be}\right]-\epsilon\psi\left[pb_1(w)|\D w|^{p-2}+2b_2(w)\right](\nabla w\cdot \nabla |\eta|^2)\\
                &\left[pb_1(w)|\D w|^{p-2}+2b_2(w)\right]\left(\nabla w_{\alpha}\cdot \nabla w_{\beta}\right)\eta^{\al}\eta^{\be}+p(p-2)b_1(w)|\nabla w|^{p-4}w_kw_lw_{k\al}w_{l\be}\eta^{\al}\eta^{\be}\\
                &+\left[b_1'(w)|\nabla w|^{p}+b_2'(w)|\nabla w|^2+b_3'(w)\right]w_{\alpha\beta}\eta^{\al}\eta^{\be}.
\end{aligned}
\end{equation}
Suppose that the minimum of the second derivatives of $w$ is taken along a direction $\overline{\alpha}$ at $(0,t_0)$. Then, $e_{\overline{\alpha}}$ is an eigen direction of the symmetric matrix $D^2w(0,t_0)$. Thus, at $(0,t_0)$, we have
\begin{equation*}
\begin{array}{c}
Z=w_{\overline{\alpha}\overline{\alpha}}+\varepsilon\psi(t_0)=0,\vspace{0.3cm}\\
w_{\overline{\alpha}\beta}=0 \qquad \mbox{if $\overline{\alpha}\neq\beta$ and $e_{\beta}\in \R^{n}$,}\vspace{0.3cm}\\
D^2Z\geq 0, \quad \La Z\geq 0, \quad \nabla Z=0 \quad \mbox{and} \quad Z_t\leq 0.
\end{array}
\end{equation*}
In addition, we get
\begin{equation*}
\left(|\eta|^2\right)_{ij}=2nc_{\overline{\alpha}}^2+\frac{c_{\overline{\alpha}}(c_i+c_j)}{2} \quad \mbox{and} \quad \nabla_{x}w\cdot\nabla_{x}|\eta|^2=2c_{\overline{\alpha}}w_{\overline{\alpha}} \qquad \mbox{at $(x_0,t_0)$}.
\end{equation*}
Since $a_{ij}$ is uniformly positive, by \eqref{eq-w-1-after-first-calculation}, we have
\begin{equation}\label{eq-w-1-after-first-calculation-after-reducing}
\begin{aligned}
-\epsilon\psi'\geq &2\rho a^{ij}_{\overline{\alpha}}w_{\overline{\alpha} i j}w_{\overline{\alpha}\overline{\alpha}}-4\rho c_{\overline{\alpha}}^2 a^{\overline{\alpha}\overline{\alpha}}w_{\overline{\alpha}\overline{\alpha}}-\epsilon\psi a^{ij}\left(|\eta|^2\right)_{ij}-2c_{\overline{\alpha}}\rho a^{ij}_{\overline{\alpha}}w_{ij}w_{\overline{\alpha}\overline{\alpha}}\\
&-\rho a^{ij}_kw_{ij}\epsilon\psi\left(|\eta|^2\right)_k+\rho a^{ij}_{\overline{\alpha}\overline{\alpha}}w_{ij}w_{\overline{\alpha}\overline{\alpha}}^2+\rho'a^{ij}w_{ij}w_{\overline{\al}\overline{\alpha}}+2\rho' w_{\overline{\alpha}}a_{\overline{\alpha}}^{ij}w_{ij}w_{\overline{\alpha}\overline{\alpha}}\\
&+2p|\nabla w|^{p-2}\left(b_1'(w)w_{\overline{\alpha}}-c_{\overline{\alpha}}b_1(w)\right)w_{\overline{\alpha}}w_{\overline{\alpha}\overline{\alpha}}+2\left(b_2'(w)w_{\overline{\alpha}}-c_{\overline{\alpha}}b_2(w)\right)w_{\overline{\alpha}}w_{\overline{\alpha}\overline{\alpha}}\\
                &-\epsilon\psi\left[pb_1(w)|\D w|^{p-2}+2b_2(w)\right](\nabla w\cdot \nabla |\eta|^2)+\left[pb_1(w)|\D w|^{p-2}+2b_2(w)\right]w_{\overline{\alpha}\overline{\alpha}}^2+p(p-2)b_1(w)|\nabla w|^{p-4}w_{\overline{\alpha}}^2w_{\overline{\al}\overline{\alpha}}^2\\
                &+\left[b_1(w)'|\nabla w|^{p}+b_2'(w)|\nabla w|^2+b_3'(w)\right]w_{\overline{\alpha}\overline{\alpha}}\\
                :=& -K\epsilon\psi-L\epsilon^2\psi^2
\end{aligned}
\end{equation}
where
\begin{equation*}
\begin{aligned}
K_1=&2\rho a^{ij}_{\overline{\alpha}}w_{\overline{\alpha} i j}-4\rho c_{\overline{\alpha}}^2 a^{\overline{\alpha}\overline{\alpha}}-2c_{\overline{\alpha}}\rho a_{ij}^{\overline{\alpha}}w_{ij}+\rho'a^{ij}w_{ij}+2\rho' w_{\overline{\alpha}}a_{\overline{\alpha}}^{ij}w_{ij}\\
&+2p|\nabla w|^{p-2}\left(b_1'(w)w_{\overline{\alpha}}-c_{\overline{\alpha}}b_1(w)\right)w_{\overline{\alpha}}+2\left(b_2'(w)w_{\overline{\alpha}}-c_{\overline{\alpha}}b_2(w)\right)w_{\overline{\alpha}}+b_1'(w)|\nabla w|^{p}+b_2'(w)|\nabla w|^2+b_3'(w)\\
&+a^{ij}\left(2nc_{\overline{\alpha}}^2+\frac{c_{\overline{\alpha}}(c_i+c_j)}{2}\right)+2c_{\overline{\alpha}}w_{\overline{\alpha}}\left[pb_1(w)|\D w|^{p-2}+2b_2(w)\right]
\end{aligned}
\end{equation*}
and
\begin{equation*}
L=-\rho a^{ij}_{\overline{\alpha}\overline{\alpha}}w_{ij}-\left(pb_1(w)|\D w|^{p-2}+2b_2(w)\right)-p(p-2)b_1(w)|\nabla w|^{p-4}w_{\overline{\alpha}}^2.
\end{equation*}
By the boundary condition, the minimum point $(0,t_0)$ appears far away from the boundary. Hence, we can choose a compact subset $\mathbb{S}$ in $\Omega$ such that
\begin{equation*}
(x_0(t),z_0(t))\in \mathbb{S} \qquad \mbox{and} \qquad w(x,t) : \mbox{bounded above and below by positive constants in $\mathbb{S}$}.
\end{equation*}
The equation \eqref{eq-w}, when restricted on $\mathbb{S}$, is nondegenerate. Therefore, the classical estimates for linear parabolic equations give
\begin{equation*}
|Dw(0,t)|\leq C\|w\|_{L^{\infty}(\mathbb{S})} \qquad \mbox{and} \qquad |D^2w(0,t)|\leq C\|w\|_{L^{\infty}(\mathbb{S})}
\end{equation*}
for constant $C>0$. Hence, the quantity $K$ and $L$ are under control, i.e., there exists a constant $M$ such that
\begin{equation*}
\left|K\right|,\,\,\left|L\right| \leq M \qquad \qquad \mbox{at $(0,t_0)$}.
\end{equation*} 
Hence, if we choose the function $\psi(t)$ having the property
\begin{equation*}
\psi_t(t)>|K|\psi+|L|\psi^2,
\end{equation*}
then, contradiction arises. Therefore, the time $t=t_0$ satisfying \eqref{eq-assumption-for-t-0-with-zero-level} doesn't exist. Thus,
\begin{equation*}
\inf_{x\in\Omega, s\in[0,t]}\inf_{e_{\beta}\in\R^n, |e_{\beta}|=1}w_{\beta\beta}>-\epsilon\sup_{0\leq s\leq t}\psi(s), \qquad \forall t>0.
\end{equation*}
Letting $\epsilon\to 0$, we can get a desired conclusion.
\end{proof}

\section{Geometric Property of Gelfand Problem}

\setcounter{equation}{0}
\setcounter{thm}{0}

In previous section, we discussed the positivity of the second derivatives of solutions for degenerated parabolic equation. It is a very useful tool for investigating the geometric properties of solutions to Gelfand's problem. We now address the long-time geometrical properties of solutions for the initial value problem with exponential growth:
\begin{equation}\label{eqn}
\left(e^{u}\right)_{t}=\La u+\lambda e^u \qquad \mbox{in $Q=\Omega\times (0,\infty)$}
\end{equation}
posed in a strictly convex bounded domain $\Omega\subset\R^n$ with
\begin{equation}\label{eq-boundary-data-u-0-by-phi-epsilon}
u=0 \qquad \mbox{on $\Omega$}, \qquad \qquad u>0 \mbox{in $\Omega$}
\end{equation}
and inital data 
\begin{equation}\label{eq-initial-data-u-0-by-phi-epsilon}
u(x,0)=u_0(x)\leq\vp(x)
\end{equation}
where $\vp$ is the minimal solution of the Gelfand's problem \eqref{eq-main-1}.\\ 
\indent In this section, we aim at providing the $f$-convexity of the minimal solution $\vp$ to the problem \eqref{eq-main-1} which also satisfies the boundary condition \eqref{boundary-condition-of-gelfand-with-PME}, i.e., we want to show that
\begin{equation*}
e^{-\frac{1}{2}\vp(x)} \quad : \quad \mbox{convex with respect to space variables}.
\end{equation*}
If we try to show the $f$-convexity of $\vp$ in \eqref{eq-main-1}, we can put $v=e^{-\frac{1}{2}\vp}$ and replace $\vp$ by $-2\log v$ in the equation. Then
\begin{equation*}
v\La v-|\nabla v|^2-\frac{\lambda}{2}=0.
\end{equation*}
By direct computation, we get
\begin{equation*}
v\La w_{\alpha\alpha}+2v_{\alpha}\La v_{\alpha}-2\nabla v\cdot\nabla v_{\alpha\alpha}-2\nabla v_{\alpha}\nabla v_{\alpha}=0.
\end{equation*}
Unfortunately, there are many terms, for example $\La v_{\alpha}$, which are out of control. We don't even have any information about $v_{\alpha\alpha}$. Hence, infering the geometric properties of $\vp$ from the equation \eqref{eq-main-1} directly is very hard.  However, according to the arguments in Lee and V\'azquez's paper, \cite{LV}, the geometric properties of solutions to the nonlinear elliptic problem can be obtained by the geometric properties of the solution to the corresponding problem with parabolic flow. To apply their arguments to the Gelfand's problem, we start by showing the relation between the solution $u$ of \eqref{eqn}-\eqref{eq-initial-data-u-0-by-phi-epsilon} and $\vp$ of \eqref{eq-main-1}.
\begin{lemma} [\bf Approximation lemma]\label{lemma-Approximation-Lemma}
Let $u(x,t)$ be a solution of \eqref{eqn} and let $\vp$ be the minimal solution of \eqref{eq-main-1}. Then, we have the following properties: For any sequence $\{t_n\}_{n=1}^{\infty}$ with $t_n\to\infty$, we have a subsequence $\{t_{n_k}\}_{k=1}^{\infty}$ such that
\begin{equation}
\lim_{k\to\infty} |u(x,t_{n_k})-\vp(x)|\to 0
\label{eq-pme-approx}
\end{equation}
uniformly in  compact subset of $\Omega$.
\end{lemma}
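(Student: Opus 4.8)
The plan is to view \eqref{eqn} as a gradient flow whose $\omega$-limit set consists of stationary states, and then to combine the comparison principle with the minimality of $\vp$ to show that the only admissible limit is $\vp$ itself. It is convenient to set $\theta=e^{u}$, so that \eqref{eqn} becomes $\theta_{t}=\La\log\theta+\lambda\theta$, for which $\Theta=e^{\vp}$ is a stationary solution.

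First I would establish two-sided a priori bounds. Since $\ulu\equiv 0$ is a (strict) subsolution, as $\La 0+\lambda e^{0}=\lambda>0$, and since $\Theta=e^{\vp}$ is a stationary solution agreeing with $\theta$ on $\partial\Omega$ and dominating the initial data $e^{u_{0}}\le e^{\vp}$ by \eqref{eq-initial-data-u-0-by-phi-epsilon}, the comparison principle for the quasilinear equation yields $0\le u(x,t)\le\vp(x)$ for all $t>0$. Because $\vp$ is a classical (hence bounded) solution for $\lambda<\lambda^{\ast}$, this gives $1\le\theta\le e^{\|\vp\|_{\infty}}$, so the diffusion coefficient $1/\theta$ of the $\theta$-equation is bounded above and below. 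The equation is therefore uniformly parabolic, and the interior parabolic estimates (Schauder / De Giorgi--Nash--Moser) provide, on every compact $K\Subset\Omega$, bounds on $\|u\|_{C^{2+\alpha,\,1+\alpha/2}(K\times[T,\infty))}$ that are uniform in $t$. In particular $\{u(\cdot,t)\}_{t\ge 1}$ is precompact in $C^{2}_{loc}(\Omega)$.

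Next I would exploit the variational structure. Multiplying \eqref{eqn} by $u_{t}$ and integrating by parts (the boundary term drops since $u_{t}=0$ on $\partial\Omega$) shows that $E[u(\cdot,t)]:=\frac12\int_{\Omega}|\D u|^{2}-\lambda\int_{\Omega}e^{u}$ satisfies $\frac{d}{dt}E=-\int_{\Omega}e^{u}u_{t}^{2}\le 0$. The bounds above make $E$ bounded below, so $E[u(\cdot,t)]$ decreases to a finite limit $E_{\infty}$ and $\int_{0}^{\infty}\!\!\int_{\Omega}e^{u}u_{t}^{2}\,dx\,dt<\infty$. Given $t_{n}\to\infty$, I would study the translates $u_{n}(x,\tau)=u(x,t_{n}+\tau)$ on $\Omega\times[-1,1]$; by the compactness above a subsequence converges in $C^{2,1}_{loc}$ to a limit $U(x,\tau)$ solving \eqref{eqn}. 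Since $E$ is monotone with limit $E_{\infty}$, the limit energy $E[U(\cdot,\tau)]$ is constant in $\tau$, which forces $\int_{\Omega}e^{U}U_{\tau}^{2}=0$ and hence $U_{\tau}\equiv 0$; thus $U=U(x)$ is a stationary solution, $\La U+\lambda e^{U}=0$, with $U=0$ on $\partial\Omega$ and $0\le U\le\vp$. Since $U\not\equiv 0$ and $U$ is superharmonic, the strong maximum principle gives $U>0$ in $\Omega$, so $U$ is a positive solution of \eqref{eq-main-1}; minimality of $\vp$ gives $\vp\le U$, and combined with $U\le\vp$ we conclude $U=\vp$. As the subsequential limit is always $\vp$, this yields the stated convergence on compact subsets (and in fact upgrades to convergence of the whole flow).

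The heart of the matter is the identification step: the uniform-in-time interior regularity rests on the two-sided bound $0\le u\le\vp$, so the comparison principle is doing the essential work of rendering the flow uniformly parabolic, and the passage from $\int_{0}^{\infty}\!\int_{\Omega}e^{u}u_{t}^{2}<\infty$ to ``every $\omega$-limit is a steady state'' genuinely requires the translated-flow argument rather than a naive claim that $u_{t}(\cdot,t_{n_{k}})\to 0$ at the chosen times. I would expect the main obstacle to be carrying out the comparison principle rigorously for the degenerate-looking equation $\theta_{t}=\La\log\theta+\lambda\theta$ up to $\partial\Omega$, and ensuring the interior estimates are truly uniform as $t\to\infty$; once these are secured, the minimality of $\vp$ closes the argument cleanly.
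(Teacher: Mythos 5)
Your proposal is correct and follows the paper's overall strategy: the same Lyapunov functional (your $E$ equals the paper's $F$ after an integration by parts, and both dissipate $\int_{\Omega}e^{u}u_{t}^{2}\,dx$), the same comparison-principle bound $0\le u\le\vp$ giving uniform parabolicity and compactness, and the same identification of the limit via minimality of $\vp$ combined with the upper bound by $\vp$. Where you genuinely diverge is the mechanism proving that the limit is stationary. The paper stays at the level of the weak formulation: it multiplies \eqref{eqn} by a test function $\eta$, bounds $\int_{\Omega}e^{u}u_{t}\eta\,dx$ by Cauchy--Schwarz against $\left(\int_{\Omega}e^{u}u_{t}^{2}\,dx\right)^{1/2}$, and sends $t_{n_k}\to\infty$, asserting that $g'(t_{n})\to 0$ for \emph{any} sequence because $g$ is decreasing and bounded below. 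That assertion is delicate: a decreasing bounded function only satisfies $\int_{0}^{\infty}|g'|\,dt<\infty$, and its derivative need not tend to zero without extra regularity (Barbalat-type hypotheses), so the paper's argument as written has a small gap at exactly the point your translated-flow argument avoids --- you use only the integrated dissipation $\int_{0}^{\infty}\!\int_{\Omega}e^{u}u_{t}^{2}<\infty$, which forces $\int_{t_{n}-1}^{t_{n}+1}\!\int_{\Omega}e^{u}u_{t}^{2}\to 0$ and hence stationarity of any limit of the translates. This buys robustness (no pointwise decay of $u_{t}$ at the chosen times is needed) at the price of requiring $C^{2,1}_{loc}$ compactness, whereas the paper needs only equi-H\"older bounds and Ascoli--Arzel\`a. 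One caution on your write-up: the step ``the limit energy $E[U(\cdot,\tau)]$ is constant in $\tau$'' presumes convergence of $\int_{\Omega}|\D u_{n}|^{2}$, i.e. gradient control up to $\partial\Omega$, which your interior estimates do not provide; you should instead conclude $U_{\tau}\equiv 0$ directly by Fatou's lemma applied to the vanishing dissipation integral over $\Omega\times[-1,1]$ (as your closing paragraph in effect proposes), after which your identification argument ($U$ superharmonic, $0\le U\le\vp$, minimality, hence $U=\vp$) goes through verbatim and indeed upgrades the conclusion to convergence of the whole flow.
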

\begin{proof}
Define the functional $F(\phi)$ by
\begin{equation*}
F(\phi)=-\int_{\Omega}\left(\frac{1}{2}\phi\La\phi+\lambda e^{\phi}\right)\,dx
\end{equation*}
and let $g(t)=F(u(\cdot,t))$. Then a simple computation yields
\begin{equation}\label{eq-negativity-of-g-prime-1}
g'(t)=-\lambda\int_{\Omega}e^{u(x,t)}\left[u_t(x,t)\right]^2\,dx\leq 0.
\end{equation}
Uniformly ellipticity of the coefficients in \eqref{eqn} show that $\int_{\Omega}e^{u(x,t)}\,dx$ is bounded for all $t\geq 0$. Hence, for some constant $M>0$,
\begin{equation}\label{eq-bounded-below-of-g-323}
g(t)>-M \qquad \forall t>0.
\end{equation}
Therefore, by \eqref{eq-negativity-of-g-prime-1} and \eqref{eq-bounded-below-of-g-323}, $\lim_{t\to\infty}g(t)$ exists and $g'(t)\to 0$. Hence, for any sequence of times $\{t_n\}_{n=1}^{\infty}$, $t_n\to\infty$, $g'(t_n)\to 0$.\\
\indent Observe that the equation \eqref{eqn} is uniformly parabolic in $\Omega$. Thus, by the comparison principle, there exists a uniform constant $C$ such that
\begin{equation}\label{eq-comparison-principle-of-u-and-vp}
0\leq u(x,t)\leq \vp(x)\leq C.
\end{equation}
Moreover, by the Schauder estimates for parabolic equation \cite{LSU}, the sequence $u(\cdot,t_n)$ is equi-H\"older continuous on every compact subset $K$ of $\Omega$. Hence, by the Ascoli Theorem, the  sequence $\{u(\cdot,t_n)\}$ has a subsequence $\{u(\cdot,t_{n_k})\}$ that converges to some function $h$ uniformly on every compact subset of $K$ which is non-trivial. The readers can easily check the non-triviality of $h$.\\
Multiplying equation \eqref{eqn} by any test function $\eta\in C_{0}^{\infty}(\Omega)$ and integrate in space. Then for each $t_{n_k}$,
\begin{equation}\label{eq-integral-form-of-u-for-behavioru-limit-h}
\lambda\int_{K}e^uu_t\eta\,dx=\int_{K}u\La\eta\,dx+\lambda\int_{K}e^{u}\eta\,dx.
\end{equation} 
Since the absolute value of the left hand side of \eqref{eq-integral-form-of-u-for-behavioru-limit-h} is bounded above by
\begin{equation}\label{eq-divided-the-left-hand-term-by-two-with-coverging-zero-term}
\lambda\left(\int_{\Omega}\eta^2e^u\,dx\right)^{\frac{1}{2}}\left(\int_{\Omega}e^u\left(u_t\right)^2\,dx\right)^{\frac{1}{2}}
\end{equation}
and the second term of \eqref{eq-divided-the-left-hand-term-by-two-with-coverging-zero-term} has limit zero as $t_{n_k}\to\infty$, we  get in the limit $t_{n_k}\to\infty$
\begin{equation*}
0=\int_{K}h\La\eta\,dx+\lambda\int_{K}e^{h}\eta\,dx,
\end{equation*} 
which is weak formulation of the equation
\begin{equation}\label{eq-weak-formulation-of-gelfand-s-problem-394}
\La h+\lambda e^h=0 \qquad \mbox{in $K$}.
\end{equation}
By the arbitrary choice of a compact subset $K$ in $\Omega$, \eqref{eq-weak-formulation-of-gelfand-s-problem-394} holds in $\Omega$. Therefore, $h$ is a weak solution of the Gelfand's problem which is smaller than the minimal solution $\vp$ because of \eqref{eq-comparison-principle-of-u-and-vp}. Since $\vp$ is the minimal solution of Gelfand's problem, $u(x,t_{n_k})$ converges uniformly on every compact subset of $\Omega$ to $\vp$ as $t_{n_k}\to\infty$ and the lemma follows.
\end{proof}

\subsection{geometric property}

We first establish some estimates for the solution $u$ of \eqref{eqn} which plays an important role for the geometric properties of solution $u$ on the boundary. 

\begin{lemma}\label{lem-estimate-of-mixed-second-derivatives-by-first-normal-direction}
Let $u\in C^2\left(\Omega)\times\left(0,\infty\right)\right)$ be a solution of  \eqref{eqn}-\eqref{eq-initial-data-u-0-by-phi-epsilon}. Then, there exists a constant $C>0$ such that
\begin{equation*}
|u_{\tau_{k}\nu}(x,t)|\leq C|u_{\nu}(x,t)|, \qquad \forall x\in\partial\Omega \quad \left(k=1, \cdots, n-1\right)
\end{equation*}
where $\nu$ and $\tau_k$ are normal and tangential directions to $\partial\Omega$ at $(x,t)$, respectively.
\end{lemma}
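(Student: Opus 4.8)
The plan is to reduce the claimed inequality to two uniform bounds along $\partial\Omega$: an upper bound on the full spatial Hessian $|D^2 u|$, and a lower bound on the size of the normal derivative $|u_\nu|$. Once both are available the lemma is immediate, since $u_{\tau_k\nu}$ is a single entry of $D^2 u$, so $|u_{\tau_k\nu}|\le |D^2 u|\le C$, while $|u_\nu|\ge c_0>0$ gives $|u_{\tau_k\nu}|\le (C/c_0)\,|u_\nu|$. I would first record the geometric consequences of the boundary data: since $u(\cdot,t)\equiv 0$ on $\partial\Omega$ for every $t$, all tangential derivatives of $u$ vanish there, so $\nabla u=u_\nu\nu$ on $\partial\Omega$; and because $u>0$ in $\Omega$ with $u=0$ on $\partial\Omega$, Hopf's lemma yields $u_\nu<0$ on the boundary. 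I would also rewrite \eqref{eqn} in the nondegenerate form $u_t=e^{-u}\Delta u+\lambda$. By the comparison estimate \eqref{eq-comparison-principle-of-u-and-vp} we have $0\le u\le \varphi\le C$, hence the coefficient $e^{-u}$ lies in $[e^{-C},1]$, so the equation is uniformly parabolic with smooth bounded coefficients and smooth (zero) boundary data.

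For the upper bound I would invoke the boundary Schauder estimates for uniformly parabolic equations on unit time slabs $[t-1,t]$. Using the uniform $L^\infty$ bound together with the smoothness of $\partial\Omega$ and of the boundary data, these give $\|u\|_{C^{2,\alpha}(\overline{\Omega}\times[t-\frac12,t])}\le C$ with $C$ independent of $t$; the initial slab $[0,1]$ is controlled by the regularity of $u_0$. Consequently $|D^2 u(x,t)|\le C$ for all $x\in\partial\Omega$ and all $t\ge 0$, and in particular $|u_{\tau_k\nu}(x,t)|\le C$.

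The main obstacle is the uniform-in-$t$ lower bound $|u_\nu(x,t)|\ge c_0>0$: the danger is that $u$ could flatten near $\partial\Omega$ for some times, driving $u_\nu$ to zero. I would rule this out with a fixed, time-independent lower barrier. Let $\phi_1>0$ be the first Dirichlet eigenfunction of $-\Delta$ on $\Omega$, with eigenvalue $\mu_1>0$, and set $\underline u=\delta\phi_1$. For $\delta$ small, $\Delta\underline u+\lambda e^{\underline u}=-\delta\mu_1\phi_1+\lambda e^{\delta\phi_1}\ge -\delta\mu_1\|\phi_1\|_\infty+\lambda>0$, so $\underline u$ is a stationary subsolution of \eqref{eqn} vanishing on $\partial\Omega$; shrinking $\delta$ further we also arrange $\underline u\le u_0$ (possible since $u_0>0$ in $\Omega$ with negative normal derivative). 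The parabolic comparison principle then gives $u(\cdot,t)\ge\underline u$ for all $t\ge 0$. Since $u$ and $\underline u$ both vanish on $\partial\Omega$ and $u\ge\underline u$ inside, comparing inner difference quotients at a boundary point yields $u_\nu\le\underline u_\nu$, while Hopf's lemma applied to $\phi_1$ gives $\underline u_\nu=\delta\,\partial_\nu\phi_1\le -c_0<0$ on $\partial\Omega$. Hence $|u_\nu(x,t)|\ge c_0$ uniformly on $\partial\Omega\times[0,\infty)$.

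Combining the three steps proves the lemma with constant $C'=C/c_0$. I expect the genuinely delicate point to be the uniform lower bound on $|u_\nu|$, i.e. guaranteeing that the barrier $\underline u$ stays below $u$ for all times without appealing to monotonicity of the flow (which is not assumed here); the eigenfunction subsolution above is designed precisely to avoid needing $u_0$ to be a subsolution of the Gelfand problem. The remaining ingredients—uniform parabolicity from $0\le u\le C$ and boundary Schauder regularity—are standard once the equation is written as $u_t=e^{-u}\Delta u+\lambda$.
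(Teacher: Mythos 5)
Your argument is correct in outline, but it takes a genuinely different route from the paper. The paper proves the proportionality $|u_{\tau_k\nu}|\le C|u_\nu|$ \emph{directly} by a local barrier argument: it uses the strict convexity of $\Omega$ to build, from the largest interior tangent circle of radius $R$, vector fields whose derivatives $\partial_{T_k}u$ agree with tangential derivatives on $\partial\Omega$, forms $h_{\pm}=\partial_{T_k}u\pm\sum_{l}\left(\partial_{T_l}u\right)^2$ and $H_{\pm}=\eta^2h_{\pm}\pm\mu x_n^2$ as in \eqref{eq-aligned-test-function-h-and-v-for-comparison-3245}, verifies $LH_+\ge 0$ and $LH_-\le 0$ in a small parabolic cylinder, and then reads off the mixed-derivative bound by differentiating the resulting inequality $|h_{\pm}|\le C|u_\nu(0,t_0)|\,x_n$ in the normal direction. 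You instead decouple the estimate into (i) a uniform Hessian bound $|D^2u|\le C$ on $\partial\Omega$ from boundary Schauder theory (legitimate, since $u_t=e^{-u}\Delta u+\lambda$ is uniformly parabolic by \eqref{eq-comparison-principle-of-u-and-vp}; note one should first invoke Krylov--Safonov to make the coefficient $e^{-u}$ H\"older continuous before applying Schauder, and uniformity near $t=0$ needs compatibility of $u_0$), and (ii) a uniform Hopf-type lower bound $|u_\nu|\ge c_0$ from a stationary eigenfunction subsolution. Your proof is shorter and more modular, but the constant $C/c_0$ it produces is a ratio of a global Schauder constant to a global Hopf constant and carries no geometric information. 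The paper's constant, by contrast, is tied to the inner tangent radius and behaves correctly under symmetry; this matters downstream, since Lemma \ref{lemma-boundary-estimate-for-power-convexity} requires $K$ in \eqref{boundary-condition-of-gelfand-with-PME} to dominate (the square of) this constant, and Section 4 relates $K$ to $|R_\Omega-r_\Omega|$, with $K=0$ in the radially symmetric case. With your cruder constant, the boundary condition hypothesis becomes correspondingly more restrictive.

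One step of yours needs repair: the comparison $u\ge\underline u=\delta\phi_1$ requires $\delta\phi_1\le u_0$, which you justify by asserting that $u_0$ has strictly negative normal derivative. That is not among the hypotheses of the lemma (only $u_0>0$ in $\Omega$ and $u_0\le\vp$). It can be rescued in two ways: either invoke the standing assumption of the introduction that $e^{-\frac12 u_0}$ is convex, since convexity of $w_0=e^{-\frac12 u_0}$ with $w_0=1$ on $\partial\Omega$ and $w_0<1$ inside forces $1-w_0(x)\ge c\,\mathrm{dist}(x,\partial\Omega)$ and hence $u_0\ge c'\,\mathrm{dist}(x,\partial\Omega)\ge\delta\phi_1$ for small $\delta$; or start the comparison at a positive time $t_1$, where the strong maximum principle and Hopf's lemma give $u(\cdot,t_1)\ge c\,\mathrm{dist}(\cdot,\partial\Omega)$, obtaining the estimate uniformly for $t\ge t_1$ (times in $(0,t_1]$ then require regularity/compatibility of $u_0$ anyway, an issue the paper's own proof shares, since its backward cylinder $[t_0-\rho^2,t_0]$ and the bounds on $|\nabla u|$, $|D^2u|$, $|u_t|$ there presuppose the same).
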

\begin{proof}
Without loss of generality, we may assume that $x=0$ and outer normal direction $e_{\nu}=e_n$. For any $1\leq k\leq n-1$, we consider the directional derivative 
\begin{equation*}
\partial_{T_k}u=P(x)\cdot\nabla u, \qquad \left(P(x)=(p_1(x),\cdots,p_n(x))\right)
\end{equation*}
which is the same as a tangential derivative on the boundary $\partial\Omega$. By the strictly convexity of domain $\Omega$, we can find the largest circle of radius $R=R_k$ that touches the domain $\Omega$ from inside at $x=0$ in the plane generated by two directions $e_{\tau_k}$ and $e_{\nu}$. Note that the reciprocal of $R$ lies on between principle curvatures at $x=0$. Moreover, $\left(R-x_n\right)u_k+x_ku_n$ is a tangential derivative on the circle. Thus, the directional derivative $\partial_{T_k}u$ can be expressed in the form
\begin{equation*}
\partial_{T_k}u(x,t)=\left(R-x_n+O(|x|^2)\right)u_k(x,t)+\left(x_k+O(|x|^2)\right)u_n(x,t)
\end{equation*}
near the point $x=0$. \\
\indent For a constant $0<\rho<1$ and a time $t_0>0$, let $B_{\rho}=B_{\rho}(0)$ be the ball of radius $\rho$ centered at $0$ and define the new functions $h_{\pm}$ and $H_{\pm}$ by
\begin{equation}\label{eq-aligned-test-function-h-and-v-for-comparison-3245}
\begin{aligned}
h_{\pm}&=\partial_{T_k}u\pm\sum_{l=1}^{n-1}\left(\partial_{T_l}u\right)^2,\\
H_{\pm}&=\eta^2h_{\pm}\pm\mu x_n^2, 
\end{aligned}
\end{equation}
where $\eta(x,t)=\left(\rho^2-|x|^2\right)\left(t-t_0+\rho^2\right)$. Then, it can be easily checked that
\begin{equation}\label{eq-boundary-comparison-linear-and-v-2}
\left|H_{\pm}(x,t)\right|\leq -\frac{\mu\rho^2}{2} x_n .
\end{equation}
on $\left\{\partial(\Omega\cap B_{\rho})\right\}\times[t_0-\rho^2,t_0]$ and $\left\{\Omega\cap B_{\rho}\right\}\times\{t=t_0-\rho^2\}$\\
\indent Let us now show that $L H_{+}\geq 0$ and $L H_{-}\leq 0$ in $\Omega\cap B_{\rho}$ where $L$ is defined by
\begin{equation*}
Lf=\La f+\lambda e^uf-f_t.
\end{equation*}
By direct computation, we can find a constant $\rho_1>0$ such that 
\begin{equation*}
\sum_{l=1}^{n-1}\left|\partial_{T_l}u\right|^2\geq \frac{R^2}{4}\sum_{l=1}^{n-1}\left|\nabla u_{l}\right|^2-C_1\left(1+|\nabla u|^2\right) \qquad \forall 0<\rho\leq \rho_1
\end{equation*}
for some constant $C_1>0$. Thus, we have, for some constant $c>0$,
\begin{equation*}
\begin{aligned}
LH_{+}&\geq\eta^2\left(\La h_{+}+\lambda e^{u}h_{+}-\left(h_{+}\right)_t\right)+2h_{+}\left(\eta\La\eta+|\nabla\eta|^2-\eta\eta_t\right)\\
&\quad +4\eta\nabla\eta\cdot\nabla h_{+}+2\mu\\
&\geq 2\eta^2\sum_{l=1}^{n-1}\left|\nabla\partial_{T_l}u\right|^2\\
&\quad -c\eta^2\bigg[1+2\left(R|\nabla_{x'}u|+\rho|u_n|\right)\bigg]\left[\sum_{l=1}^{n-1}\left\{|\nabla u|+\rho\left(\sum_{i=1}^{n}|u_{li}|+\sum_{i=1}^{n-1}|u_{in}|+|\La_{x'}u|+\lambda e^u+|u_t|\right)\right\}\right]\\
&\qquad -\lambda \eta^2e^u\left(R^2|\nabla_{x'}u|^2+2R\rho|\nabla_{x'}u||u_n|+\rho^2|u_n|\right)\\
&\qquad -4n(n+3)\rho^6\left(R|\nabla_{x'}u|+\rho|u_n|+R^2|\nabla_{x'}u|^2+2R\rho|\nabla_{x'}u||u_n|+\rho^2|u_n|^2\right)\\
&\qquad -16\rho^3\eta\left(R|\nabla_{x'}u|+\rho|u_n|\right)\sum_{l=1}^{n-1}\left|\nabla\left(\partial_{T_l}u\right)\right|\\
&\qquad -8\rho^4\eta\left[\sum_{i=1}^{n-1}|u_{ni}|+|\La_{x'}u|+\lambda e^u+|u_t|\right]-8R\rho^3\eta\sum_{i=1}^{n-1}|u_{ki}|-8\rho^3\eta|\nabla u|+2\mu\\
&\geq\left[\eta^2\sum_{l=1}^{n-1}\left|\nabla\partial_{T_l}u\right|^2-16\rho^3\eta\left(R|\nabla_{x'}u|+\rho|u_n|\right)\sum_{l=1}^{n-1}\left|\nabla\left(\partial_{T_l}u\right)\right|+\frac{\mu}{2}\right]\\
&\qquad +\left[\frac{R^2\eta^2}{8}\sum_{l=1}^{n-1}\left|\nabla u_l\right|^2-3c\rho\eta^2\bigg[1+2\left(R|\nabla_{x'}u|+\rho|u_n|\right)\bigg]\sum_{l=1}^{n-1}\left|\nabla u_l\right|+\frac{\mu}{2}\right]\\
&\qquad +\left[\frac{R^2\eta^2}{8}\sum_{l=1}^{n-1}\left|\nabla u_l\right|^2-8\rho^3\eta\left(R+\rho\right)\sum_{l=1}^{n-1}\left|\nabla u_l\right|+\frac{\mu}{2}\right]\\
&\qquad +\frac{\mu}{2}-2\rho^6\left[K_0+K_1\rho+K_2\rho^2+K_3\rho^3+K_4\rho^4\right]
\end{aligned}
\end{equation*}
where 
\begin{equation*}
K_0=4n(n+3)\left(R|\nabla_{x'}u|+R^2|\nabla_{x'}u|^2\right)
\end{equation*}
\begin{equation*} 
K_1=4\left[n(n+3)\left(|u_n|+2R|\nabla_{x'}u||u_n|\right)+2|\nabla u|\right],
\end{equation*}
\begin{equation*}
K_2=\frac{C_1}{2}\left(1+|\nabla u|^2\right)+4n(n+3)|u_n|^2+\lambda R^2e^u|\nabla_{x'}u|+8\left(\lambda e^u+|u_t|\right)+c(n-1)(1+2R|\nabla_{x'}u|)|\nabla u|,
\end{equation*}
\begin{equation*}
K_3=c(n-1)\left[|u_n||\nabla u|+\left(1+2R|\nabla_{x'}u|\right)\left(\lambda e^u+|u_t|\right)\right]+2R\lambda\rho e^u|\nabla_{x'}u||u_n|
\end{equation*}
and
\begin{equation*} 
K_4=c(n-1)|u_n|\left(\lambda e^u+|u_t|\right)+\lambda e^u|u_n|.
\end{equation*}
We now choose a constant $\rho_2$ such that , $\forall 0<\rho\leq \rho_2$,
\begin{equation*}
\rho|\nabla u|^2\leq 1, \qquad 2^7\left(R|\nabla_{x'}u|+\rho|u_n|\right)^2<|u_n(0,t_0)|,
\end{equation*}
\begin{equation*}
\frac{36c^2\rho^4}{R^2}\bigg[1+2\left(R|\nabla_{x'}u|+\rho|u_n|\right)\bigg]^2\leq |u_n(0,t_0)|, \qquad 2^8\left(1+\frac{\rho}{R}\right)^2\leq 2^9
\end{equation*}
and
\begin{equation*}
K_0+K_1\rho+K_2\rho^2+K_3\rho^3+K_4\rho^4<\frac{|u_n(0,t_0)|}{4}
\end{equation*}
in $\left\{\Omega\cap B_{\rho}\right\}\times[t_0-\rho^2,t_0]$. Thus, for $0<\rho<\min\left\{\rho_1,\rho_2\right\}$,
\begin{equation}\label{eq-sub-and-super-solution-of-L-v-+-and-v--}
L\left(H_+\right)\geq 0 \qquad \qquad \mbox{if $\mu\geq \max\{|u_n(0,t_0)|\rho^6,2^9\rho^6\}=2C_1|u_n(0,t_0)|\rho^6$}.
\end{equation}
Hence, by \eqref{eq-boundary-comparison-linear-and-v-2} and \eqref{eq-sub-and-super-solution-of-L-v-+-and-v--}, 
\begin{equation*}
H_{+}(x,t)\leq -C_1|u_n(0,t_0)|\rho^8x_n \qquad \mbox{on $\Omega\cap B_{\rho}\times(t_0-\rho^2,t_0]$}.
\end{equation*}
This immediately implies that
\begin{equation*}
h_{+}(x,t)\leq C_1u_{n}(0,t_0)x_n \qquad \mbox{in $\Omega\cap B_{\frac{\rho}{2}}\times\left(t_0-\frac{\rho^2}{4},t_0\right]$}
\end{equation*}
Similarly, we can also show that
\begin{equation*}
h_{-}(x,t)\geq -C_2u_{n}(0,t_0)x_n \qquad \mbox{in $\Omega\cap B_{\frac{\rho}{2}}\times\left(t_0-\frac{\rho^2}{4},t_0\right]$}
\end{equation*}
for some constant $C_2>0$. Therefore
\begin{equation*}
-\overline{C}Ru_n(0,t_0)x_n\leq v_{-}(x,t)\leq v_{+}(x,t)\leq \overline{C}Ru_{n}(0,t_0)x_n \qquad \mbox{in $\Omega\cap B_{\frac{\rho}{2}}\times\left(t_0-\frac{\rho^2}{4},t_0\right]$}.
\end{equation*}
for constant $\overline{C}=\frac{1}{R}\max\{C_1,C_2\}$. Taking the normal derivative $\partial_n$ at $(0,t_0)$, we obtain
\begin{equation*}
\left|u_{kn}(0,t_0)\right|=\left|\frac{\left(v_+\right)_n(0,t_0)}{R}\right|\leq \overline{C}|u_n(0)|, \qquad \forall 1\leq k\leq n-1
\end{equation*}
and the lemma follows.
\end{proof}

Next, coming to our subject, we have the following result about {\sl preservation of f-
convexity,} which is easy but allows to present the basic technique. Our geometrical
results will be derived under the extra assumption that $\Omega$
is strictly convex.
\begin{lemma}\label{lemma-boundary-estimate-for-power-convexity}
Let $\Omega$ be a strictly convex bounded subset in $\R^n$ and assume that $u$ is a solution of \eqref{eqn}-\eqref{eq-initial-data-u-0-by-phi-epsilon} with the boundary condition \eqref{boundary-condition-of-gelfand-with-PME}. Let $u=-2\log w$. Then, for every $t>0$, as $x\to x_0\in\partial\Omega$
\begin{equation}\label{eq-aligned-on-the-boundary-estimate-of-second-derivatives-of-w-all-direction}
w_{\alpha\alpha}(x,t)=\frac{1}{2}e^{-\frac{1}{2}u}\left(\frac{1}{2}u_{\alpha}^2-u_{\alpha\alpha}\right)\geq \delta_0>0, \qquad \left(\alpha=e_{\alpha}\in\R^n,\quad \mbox{and} \quad \left|e_{\alpha}\right|=1\right)
\end{equation}
for a uniform constant $\delta_0$ depending on  $\partial\Omega$.
\end{lemma}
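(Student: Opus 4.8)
\section*{Proof proposal}

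The plan is to work entirely on $\partial\Omega$, where $u\equiv 0$ for all $t$, and to reduce the claim to a lower bound for a single quadratic form in the ``tilt'' of the direction $e_\al$ relative to the boundary. Fix $x_0\in\partial\Omega$, let $e_\nu$ be the outer normal and $e_{\tau_1},\dots,e_{\tau_{n-1}}$ an orthonormal tangential frame, and write an arbitrary unit vector as $e_\al=\cos\th\,e_\nu+\sin\th\,e_\tau$ with $e_\tau$ a unit tangent vector. Since $w_{\al\al}=\tfrac12 e^{-u/2}\left(\tfrac12 u_\al^2-u_{\al\al}\right)$ and $u=0$ on $\partial\Omega$, it suffices to bound $\tfrac12 u_\al^2-u_{\al\al}$ below by a positive constant, uniformly in $x_0$, $t$ and $\th$.

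First I would compute the three types of boundary derivatives. Because $u\equiv 0$ on $\partial\Omega$, the gradient is purely normal, $\D u=u_\nu e_\nu$, so $u_\al=u_\nu\cos\th$ and every tangential first derivative vanishes. Differentiating $u\equiv 0$ twice along the surface expresses the tangential Hessian through the second fundamental form: $u_{\tau\tau}=u_\nu\kappa_\tau$, where $\kappa_\tau\ge\kappa_{\min}>0$ is the normal curvature of $\partial\Omega$ in the direction $e_\tau$ (here strict convexity of $\Omega$ enters). For the normal--normal derivative I would use the equation: since $u$ vanishes on $\partial\Omega$ for all $t$, we have $u_t=0$ there, so $\left(e^u\right)_t=\La u+\la e^u$ forces $\La u=-\la$ on $\partial\Omega$; taking the trace of the Hessian in the frame $\{e_\nu,e_{\tau_k}\}$ then gives $u_{\nu\nu}=-\la-(n-1)H(\partial\Omega)u_\nu$. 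In particular the pure normal quantity is $\tfrac12 u_\nu^2-u_{\nu\nu}=\tfrac12 u_\nu^2+\la+(n-1)H(\partial\Omega)u_\nu=G(u,\la,\Omega)-Ku_\nu$, exactly the expression controlled by the boundary hypothesis \eqref{boundary-condition-of-gelfand-with-PME}. Finally, Lemma \ref{lem-estimate-of-mixed-second-derivatives-by-first-normal-direction} supplies $|u_{\nu\tau}|\le C|u_\nu|$ for a geometric constant $C$.

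Assembling these, $\tfrac12 u_\al^2-u_{\al\al}$ becomes the quadratic form $A\cos^2\th+2B\cos\th\sin\th+D\sin^2\th$ with $A=\tfrac12 u_\nu^2-u_{\nu\nu}$, $B=-u_{\nu\tau}$, $D=-u_\nu\kappa_\tau$. Since $u_\nu<0$ (Hopf), \eqref{boundary-condition-of-gelfand-with-PME} gives $A=G-Ku_\nu>K|u_\nu|$, while $D=|u_\nu|\kappa_\tau\ge\kappa_{\min}|u_\nu|$ and $|B|\le C|u_\nu|$. A single application of Young's inequality to the cross term, together with the choice of $K$ large enough (say $K>4C^2/\kappa_{\min}$ --- this is the ``sufficiently large $K$'' of \eqref{boundary-condition-of-gelfand-with-PME}), absorbs $2|B\cos\th\sin\th|$ and yields $\tfrac12 u_\al^2-u_{\al\al}>\tfrac{\kappa_{\min}}{2}|u_\nu|$ for every $\th$. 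Hence $w_{\al\al}\ge\tfrac{\kappa_{\min}}{4}|u_\nu|$ in the limit $x\to x_0$.

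The remaining --- and genuinely delicate --- point is to bound $|u_\nu|$ below \emph{uniformly in $x_0$ and $t$}, for without it the tangential case $\th=\pi/2$, where $\tfrac12 u_\al^2-u_{\al\al}=|u_\nu|\kappa_\tau$ has no other positive contribution, could degenerate as $u_\nu\to 0$. I expect this to be the main obstacle. The upper bound $|u_\nu|\le|\vp_\nu|$ is immediate from $u\le\vp$, but the lower bound requires a barrier: one constructs a time-independent strict subsolution $\olw$ of \eqref{eqn} vanishing on $\partial\Omega$ with $|\olw_\nu|\ge c_1>0$ and lying below $u_0$, so that comparison gives $u(\cdot,t)\ge\olw$ and hence $|u_\nu(x_0,t)|\ge c_1$ for all $t$; alternatively one combines the pointwise Hopf bound (continuous, hence uniformly positive on $\partial\Omega\times[0,T]$ for each $T$) with the convergence $u(\cdot,t)\to\vp$ from Lemma \ref{lemma-Approximation-Lemma} and $|\vp_\nu|>0$ to control large times. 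With such a $c_1$ the lemma follows with $\delta_0=\tfrac{\kappa_{\min}}{4}c_1$.
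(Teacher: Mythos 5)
Your proposal is correct and follows essentially the same route as the paper's proof: decompose $e_\al$ into normal and tangential parts, use $u\equiv 0$ on $\partial\Omega$ to get $u_t=0$ and hence $\La u=-\lambda$ there (which yields the normal-normal term and ties it to $G$ in \eqref{boundary-condition-of-gelfand-with-PME}), use strict convexity for the tangential Hessian, Lemma \ref{lem-estimate-of-mixed-second-derivatives-by-first-normal-direction} for the mixed term, and choose $K$ large to make the resulting quadratic form in $(\cos\th,\sin\th)$ positive definite. The only difference is presentational (second fundamental form and Young's inequality in place of the paper's boundary-flattening and discriminant computation), plus your more careful discussion of the uniform-in-$t$ lower bound on $|u_\nu|$, which the paper disposes of with a bare appeal to Hopf's lemma.
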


\begin{proof}
By direct computation, we have
\begin{equation*}
w_{\alpha\alpha}(x,t)=\frac{1}{2}e^{-\frac{1}{2}u}\left(\frac{1}{2}u_{\alpha}^2-u_{\alpha\alpha}\right), \qquad \mbox{on $\partial\Omega$}.
\end{equation*}
If $e_{\alpha}=\tau$, a
tangent direction at $x_0$ to $\partial\Omega$, then
$u_{\alpha}=0$. Hence, we need to estimate
$u_{\alpha\alpha}$. For this, we use the fact
that $\partial \Omega$ is strictly convex. Without loss of generality, we may assume that $x_0=0$ and the tangent plane is
$x_{n}=0$. We may also assume that the boundary is given locally by
the equation $x_{n}=f(x')$, and $x'=(x_1,\cdots,x_{n-1})$. We introduce
the change of variables
\begin{equation}\label{eq-change-of-variables-for-flattening-the-boundary}
\tilde{x}_{\alpha}=x_{\alpha},\qquad \tilde{x}_{n}=x_{n}-f(x'),
\qquad
g(\tilde{x}',\tilde{x}_{n},t)=u(x',x_{n},t).
\end{equation}
Then along tangent directions we have
\begin{equation*}
\begin{aligned}
u_{\alpha\alpha}(x',x_{n},t)=&g_{\alpha\alpha}(\tilde{x}',\tilde{x}_{n},t)-2g_{\alpha\tilde{x}_{n}}(\tilde{x}',\tilde{x}_{n},t)f_{\alpha}(x')\\
&+g_{\tilde{x}_{n}\tilde{x}_{n}}(\tilde{x}',\tilde{x}_{n},t)(f_{\alpha}(x'))^2-g_{\tilde{x}_{n}}(\tilde{x}',\tilde{x}_{n},t)f_{\alpha\alpha}(x').
\end{aligned}
\end{equation*}
Since $f_{\alpha}(0)=0$ and $f_{\alpha\alpha}(0)$ are nonzeros along
all tangent directions, we get
\begin{equation}\label{eq-tangential}
u_{\alpha\alpha}(0,0,t)=-g_{\tilde{x}_{n}}(0,0,t)f_{\alpha\alpha}(0)=-u_{x_{n}}(0,0,t)f_{\alpha\alpha}(0).
\end{equation}
Since $u$ is a solution to a parabolic equation with uniformly elliptic coefficients, by the Hopf's lemma for the classical partial differential equation, $0<c_0\leq |\nabla u|$. Hence
\begin{equation*}
u_{\tau\tau}(x_0,t)=-u_{\nu}(x_0,t)f_{\tau\tau}(0)<0
\end{equation*}
and
\begin{equation}\label{eq-aligned-on-the-boundary-estimate-of-second-derivatives-of-w-tangential}
\begin{aligned}
w_{\tau\tau}(x,t)=\frac{1}{2}e^{-\frac{1}{2}u}\left(\frac{1}{2}u_{\tau}^2-u_{\tau\tau}\right)> \frac{c_0f_{\tau\tau}(0)}{2} \qquad \mbox{as $x\to x_0\in\partial\Omega$}.
\end{aligned}
\end{equation}
Let $e_{\alpha}=\nu$. On $\partial\Omega$
\begin{equation*}
0=\lambda\left(e^u\right)_t=\La u+\lambda e^u=\La u+\lambda.
\end{equation*}
Thus, we have
\begin{equation*}
\frac{1}{2}u_{\nu}^2-u_{\nu\nu}=\frac{1}{2}u_{\nu}^2+\lambda+\sum_{i=1}^{n-1}u_{\tau_i\tau_i}=\frac{1}{2}u_{\nu}^2+\lambda+(n-1)u_{\nu}H(\partial\Omega) \qquad \mbox{on $\partial\Omega$},
\end{equation*}
where $H(\partial\Omega)$ is the mean curvature of $\partial\Omega$ at $x_0$. By the boundary condition \eqref{boundary-condition-of-gelfand-with-PME}, we can obtain
\begin{equation}\label{eq-aligned-on-the-boundary-estimate-of-second-derivatives-of-w-normal}
\begin{aligned}
w_{\nu\nu}(x,t)=\frac{1}{2}e^{-\frac{1}{2}u}\left(\frac{1}{2}u_{\nu}^2-u_{\nu\nu}\right)=\frac{1}{2}\left(\frac{1}{2}u_{\nu}^2+\lambda+(n-1)u_{\nu}H(\partial\Omega)\right)>0
\qquad \mbox{on $\partial\Omega$}.
\end{aligned}
\end{equation}
\indent Finally, we check the case that the minimum of the second derivatives of $w$ occurs along a general direction $e_{\alpha}$ at $x_0$. Since the outer normal direction $e_{\nu}$ is vertical to the tangent plane, we can express a general direction $e_{\alpha}$ by
\begin{equation}\label{eq-general-for-directions-1}
\begin{aligned}
e_{\alpha}=k_1e_{\alpha_{\tau}}+k_2e_{\nu}, \qquad \left(k_1^2+k_2^2=1\right).
\end{aligned}
\end{equation}
where $e_{\alpha_{\tau}}$ is a direction contained on the tangent plane to the graph of $u$ at $(0,t)$. Hence, the second derivatives of $w$ can be written in the form
\begin{equation*}
\begin{aligned}
w_{\alpha\alpha}&=k_2^2w_{\nu\nu}+2k_1k_2w_{\alpha_{\tau}\nu}+k_1^2w_{\alpha_{\tau}\alpha_{\tau}}\\
&=\frac{1}{2}e^{\frac{1}{2}u}\left[k_2^2\left(\frac{1}{2}u_{\nu}^2-u_{\nu\nu}\right)+2k_1k_2\left(\frac{1}{2}u_{\alpha_{\tau}}u_{\nu}-u_{\alpha_{\tau}\nu}\right)+k_1^2\left(\frac{1}{2}u_{\alpha_{\tau}}^2-u_{\alpha_{\tau}\alpha_{\tau}}\right)\right].
\end{aligned}
\end{equation*}
Thus, at $x=x_0$, we have
\begin{equation}\label{eq-second-derivatives-in-general-direction-of-w}
w_{\alpha\alpha}=\frac{1}{2}e^{\frac{1}{2}u}\left[\frac{k_2^2}{2}u_{\nu}^2-k_2^2u_{\nu\nu}-2k_1k_2u_{\alpha_{\tau}\nu}-k_1^2u_{\alpha_{\tau}\alpha_{\tau}}\right].
\end{equation}
 By Lemma \ref{lem-estimate-of-mixed-second-derivatives-by-first-normal-direction}, there exists some constant $C_1>0$ such that
\begin{equation}\label{eq-upper-bound-of-mixed-second-derivatives-by-normal-direction}
\left|u_{\alpha_{\tau}\nu}(0,t)\right|=\left|v_{\alpha_{\tau}\nu}(0,t)\right|\leq C_1\left|v_{\nu}(0,t)\right|=C_2\left|u_{\nu}(0,t)\right|
\end{equation}
Combining \eqref{eq-upper-bound-of-mixed-second-derivatives-by-normal-direction}  with \eqref{eq-second-derivatives-in-general-direction-of-w}, we can get
\begin{equation*}
w_{\alpha\alpha}=\frac{e^{-\frac{1}{2}u}}{2}\left[k_2^2\left(\frac{1}{2}u_{\nu}^2+\lambda+(n-1)H(\partial\Omega)u_{\nu}\right)-C_1k_1k_2|u_{\nu}|+k_1^2f_{\alpha_{\tau}\alpha_{\tau}}u_{\nu}\right] \qquad \mbox{at $x=x_0$}.
\end{equation*}
Therefore, if $u$ satisfies the boundary condition \eqref{boundary-condition-of-gelfand-with-PME} for 
\begin{equation*}
K\geq \frac{C_1^2}{4\left|f_{\alpha_{\tau}\alpha_{\tau}}(0)\right|},
\end{equation*}
then the result \eqref{eq-aligned-on-the-boundary-estimate-of-second-derivatives-of-w-all-direction} holds for all $e_{\alpha}\in\R^n$, $(\left|e_{\alpha}\right|=1)$ and the lemma follows.
\end{proof}

\begin{thm}
Let $\Omega$ be a convex  bounded domain and let $u_0\geq 0$ be a continuous and bounded initial function which satisfies \eqref{eq-initial-data-u-0-by-phi-epsilon}. Then, the solution $u$ of \eqref{eqn}-\eqref{eq-initial-data-u-0-by-phi-epsilon} with the boundary condition \eqref{boundary-condition-of-gelfand-with-PME} is $f$-convex in the space variable for all $t\geq 0$, i.e., 
\begin{equation}\label{eq-expression-of-power-convexity-(-M)}
D^2\left(e^{-\frac{1}{2}u}\right)\geq 0.
\end{equation}
\end{thm}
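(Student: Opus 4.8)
The plan is to recognize the flow \eqref{eqn} as a special instance of the degenerate equation \eqref{eq-w} after the change of unknown $w=e^{-\frac12 u}$, and then invoke the convexity-preservation result of Lemma~\ref{lemma-convexity-for-the-general-case-0}. First I would set $u=-2\log w$ and substitute $u_i=-2w_i/w$, $u_{ij}=-2w_{ij}/w+2w_iw_j/w^2$, and $e^u u_t=w^{-2}u_t=-2w_t/w^3$ into \eqref{eqn}. Multiplying through by $-\tfrac12 w^3$ collapses the flow to
\begin{equation*}
w_t=w^2\La w-w|\D w|^2-\frac{\la}{2}w .
\end{equation*}
This is exactly \eqref{eq-w} with the identifications $\rho(w)=w^2$, $a^{ij}=\delta^{ij}$, $b_1\equiv 0$, $b_2(w)=-w$, $b_3(w)=-\tfrac{\la}{2}w$ and $p=2$.

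Next I would verify the structural hypotheses of Lemma~\ref{lemma-convexity-for-the-general-case-0}. Ellipticity is immediate since $a^{ij}\xi_i\xi_j=|\xi|^2$, so $c_0=1$. Because $0\le u\le\vp$ is bounded above by a constant $C$, the new unknown satisfies $e^{-C/2}\le w\le 1$, so $\rho(w)=w^2$ is smooth and strictly positive on the relevant range. Condition~{\bf I.1} holds because $b_1\equiv0$ and the affine functions $b_2(w)=-w$, $b_3(w)=-\tfrac{\la}{2}w$ are convex; condition~{\bf I.2} holds because $\rho'=2w$, $\rho''=2$ give $\rho''-\frac{(\rho')^2}{2\rho}=2-\frac{4w^2}{2w^2}=0$. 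Thus the quasilinear flow for $w$ sits precisely inside the class covered by the interior lemma.

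It then remains to check the convexity hypothesis on the parabolic boundary. On the initial slice $\Omega\times\{0\}$ this is the standing assumption that $w(\cdot,0)=e^{-\frac12 u_0}$ is strictly convex. On the lateral boundary $\partial\Omega\times[0,\infty)$, Lemma~\ref{lemma-boundary-estimate-for-power-convexity} furnishes, under the boundary condition \eqref{boundary-condition-of-gelfand-with-PME}, a uniform bound $w_{\al\al}(x,t)\ge\delta_0>0$ for every unit direction $e_\al$. Hence $w$ is strictly convex on the whole parabolic boundary, and Lemma~\ref{lemma-convexity-for-the-general-case-0} yields $w_{\al\al}\ge0$ throughout $\Omega\times(0,\infty)$; together with the $t=0$ datum this gives $D^2\big(e^{-\frac12 u}\big)\ge0$ for all $t\ge0$, which is \eqref{eq-expression-of-power-convexity-(-M)}.

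The genuinely hard work is not in this final assembly but in the two ingredients it rests on. The delicate point is the all-directional boundary convexity of Lemma~\ref{lemma-boundary-estimate-for-power-convexity}: the tangential and normal directions are controlled directly (the latter by rewriting $\La u+\la=0$ on $\partial\Omega$ and using \eqref{boundary-condition-of-gelfand-with-PME}), but a general direction mixes them and forces control of the cross derivative $u_{\al_\tau\nu}$, which is exactly the content of Lemma~\ref{lem-estimate-of-mixed-second-derivatives-by-first-normal-direction} and the reason the constant $K$ in \eqref{boundary-condition-of-gelfand-with-PME} must be taken large. I would therefore expect any difficulty to be concentrated there rather than in the reduction or in the interior propagation, the latter being a direct citation once the coefficients are shown to meet {\bf I.1--2}.
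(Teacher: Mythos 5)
Your proposal is correct and takes essentially the same route as the paper: the same substitution $w=e^{-\frac12 u}$ reducing \eqref{eqn} to \eqref{eq-w} with $\rho(w)=w^2$, $a^{ij}=\delta^{ij}$, $b_1\equiv 0$, $b_2(w)=-w$, $b_3(w)=-\tfrac{\lambda}{2}w$, the same verification of conditions \textbf{I.1}--\textbf{I.2}, and the same assembly of Lemma~\ref{lemma-boundary-estimate-for-power-convexity} (lateral boundary) with Lemma~\ref{lemma-convexity-for-the-general-case-0} (interior propagation). The only discrepancies are immaterial: your transformed equation reads $w_t=w^2\La w-w|\D w|^2-\tfrac{\lambda}{2}w$ while the paper writes $\lambda w_t$ on the left (a harmless constant factor on the time derivative), and your explicit citation of the strict convexity of $e^{-\frac12 u_0}$ on the initial slice is, if anything, slightly more careful than the paper's proof, which only invokes the lateral boundary estimate.
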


\begin{proof}
Let $w=e^{-\frac{1}{2}u}$ Then, the new function $w$ satisfies
\begin{equation}\label{eq-for-convexity-of-gelfand-problem-after-changing}
\lambda w_t=w^2\La w-w|\nabla w|^2-\frac{\lambda}{2}w.
\end{equation}
Hence, $w$ is a solution to the equation \eqref{eq-w} with $\rho(w)$, $a_{ij}(\nabla w)$, $b_1(w)$, $b_2(w)$ and $b_3(w)$ being replaced by $w^2$,
$I_n$, $0$, $-w$ and $-\frac{\lambda}{2}w$ respectively. Here, $I_n$ is the $n\times n$ identity matrix. \\
\indent We now are going to check that the condition \textbf{I.1} and \textbf{I.2} given in Section \ref{Section-Convexity-for-degenerate-equation} hold for the coefficients in the equation \eqref{eq-for-convexity-of-gelfand-problem-after-changing}. It is trivial that
\begin{equation*}
b_1(w)=0,\,\, b_2(w)=-w,\,\, b_3(w)=-\frac{\lambda}{2}w\,\,\mbox{\textbf{:}} \quad \mbox{convex}.
\end{equation*}
In addition, by direct computation, it can be easily shown that
\begin{equation*}
\rho''-\frac{(\rho')^2}{2\rho}=2-\frac{4w^2}{2w^2}=2-2=0.
\end{equation*}
Hence, the equation \eqref{eq-for-convexity-of-gelfand-problem-after-changing} have the condition \textbf{I} and \textbf{II}. On the other hand, Lemma \ref{lemma-boundary-estimate-for-power-convexity} tells us that the solution of the equation \eqref{eq-for-convexity-of-gelfand-problem-after-changing} with zero boundary condition is convex on the boundary. Therefore, by Lemma \ref{lemma-convexity-for-the-general-case-0}, it is also convex in the interior of domain $\Omega$ and the lemma follows. 
\end{proof}

\begin{corollary}
If $\Omega$ is convex, the the minimal stationary profile $\vp(x)$ of $u(x,t)$ is $f$-convex, i.e., $D^2\left[e^{-\frac{1}{2}\vp(x)}\right]\geq 0$.
\end{corollary}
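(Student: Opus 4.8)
The plan is to combine the two main results already in hand: the preceding Theorem, which asserts that the parabolic flow preserves $f$-convexity for every $t\ge 0$, and the Approximation Lemma (Lemma~\ref{lemma-Approximation-Lemma}), which realizes the minimal solution $\vp$ as a local-uniform limit of the flow along a sequence of times. Since convexity is a zeroth-order property that survives pointwise limits, the corollary follows by passing to the limit $t\to\infty$.

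First I would fix an initial datum $u_0$ meeting the hypotheses of the Theorem, namely $0\le u_0\le\vp$ with $e^{-u_0/2}$ (strictly) convex, and require that the solution obey the boundary condition \eqref{boundary-condition-of-gelfand-with-PME} with $K$ taken large enough, as in Lemma~\ref{lemma-boundary-estimate-for-power-convexity}; here the strict convexity of $\Omega$ is used. Let $u(x,t)$ be the corresponding solution of \eqref{eqn}-\eqref{eq-initial-data-u-0-by-phi-epsilon} and set $w(x,t)=e^{-u(x,t)/2}$. The Theorem then gives $D^2_x\, w(\cdot,t)\ge 0$ for all $t\ge 0$; equivalently, $x\mapsto w(x,t)$ is convex on $\Omega$ for each fixed $t$.

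Next I would invoke Lemma~\ref{lemma-Approximation-Lemma}: along a suitable subsequence $t_{n_k}\to\infty$ one has $u(\cdot,t_{n_k})\to\vp$ uniformly on every compact subset of $\Omega$. Since $s\mapsto e^{-s/2}$ is continuous, this forces $w(\cdot,t_{n_k})=e^{-u(\cdot,t_{n_k})/2}\to e^{-\vp/2}$ locally uniformly, hence pointwise, on $\Omega$. Because each $w(\cdot,t_{n_k})$ is convex and convexity is preserved under pointwise limits — the secant inequality $w(\theta x+(1-\theta)y,t_{n_k})\le\theta\,w(x,t_{n_k})+(1-\theta)\,w(y,t_{n_k})$ passes to the limit for all $x,y$ with $[x,y]\subset\Omega$ and all $\theta\in[0,1]$ — the limit function $e^{-\vp/2}$ is convex on $\Omega$. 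As $\vp$ is the minimal solution of \eqref{eq-main-1} with $\lambda<\lambda^{\ast}$, it is classical and hence smooth, so convexity of $e^{-\vp/2}$ is equivalent to $D^2\!\left[e^{-\vp/2}\right]\ge 0$, which is the assertion.

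The only genuinely delicate point is the passage to the limit. One cannot simply take limits of the Hessians $D^2_x\, w(\cdot,t_{n_k})$, since Lemma~\ref{lemma-Approximation-Lemma} provides only local uniform (zeroth-order) convergence and yields no a priori control on $D^2 w$ in the limit. The remedy, as above, is to use the integrated secant-line characterization of convexity rather than the second-derivative one: that formulation involves only the values of $w$ and is therefore stable under pointwise convergence, after which the smoothness of $\vp$ upgrades the resulting convexity back to the Hessian inequality $D^2\!\left[e^{-\vp/2}\right]\ge 0$.
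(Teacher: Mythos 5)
Your proposal follows essentially the same route as the paper: apply the preceding Theorem to the flow started from suitably chosen initial data, then pass to the limit $t\to\infty$ via the Approximation Lemma (Lemma~\ref{lemma-Approximation-Lemma}). The paper's own proof is a three-line version of exactly this argument; your handling of the limit passage --- using the secant-inequality characterization of convexity, which is stable under the merely locally uniform convergence the lemma provides, and then upgrading back to $D^2\left[e^{-\frac{1}{2}\vp}\right]\geq 0$ by smoothness of the minimal solution --- correctly fills in a step the paper leaves implicit.
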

\begin{proof}
Take the initial data as before. By the asymptotic result, Lemma \ref{lemma-Approximation-Lemma}, we have uniform convergence between $\vp(x)$ and $u(x,t)$. Hence, the conclusion follows.
\end{proof}

Note that $\vp$ satisfies the equation
\begin{equation*}
\La\vp+\lambda e^{\vp}=0 \qquad \mbox{in $\Omega$}.
\end{equation*}
Then $\overline{w}=f(\vp)^2=e^{-\vp}$ satisfies
\begin{equation*}
\overline{w}\La\overline{w}-\left|\nabla \overline{w}\right|^2-\lambda\overline{w}=0 \qquad \mbox{in $\Omega$}.
\end{equation*}
It has the similar form to the equation in (3.12) of the paper \cite{LV}. Hence, by an argument similar to the proof of Lemma 3.6 in \cite{LV}, we have the following strictly $f$-convexity for the minimal solution $\vp$ of Gelfand's problem.

\begin{lemma}[Strictly $f$-convexity]
If $\Omega$ is smooth and strictly convex, then the minimal solution $\vp(x)$ is strictly $f$-convex: there exists a constant $c_1>0$ such that
\begin{equation*}
D^2f(\vp)=D^2\left(e^{-\frac{1}{2}\vp}\right)\geq c_1\textbf{I}.
\end{equation*}
The constant $c_1$ depends only on the shape of $\Omega$.
\end{lemma}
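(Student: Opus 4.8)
The plan is to deduce the strict bound from the already-established (non-strict) $f$-convexity of the preceding Corollary together with the strict convexity on the boundary coming from Lemma~\ref{lemma-boundary-estimate-for-power-convexity}, by a strong maximum principle argument on the smallest eigenvalue of the Hessian, in the spirit of Lemma~3.6 of \cite{LV}. Writing $v=e^{-\frac12\vp}$, so that $v\La v-|\nabla v|^2-\frac{\lambda}{2}=0$ in $\Omega$ with $v\equiv 1$ on $\partial\Omega$, I would reformulate the goal as a uniform lower bound on $g(x):=\min_{|e_{\alpha}|=1}v_{\alpha\alpha}(x)=\lambda_{\min}(D^2v(x))$. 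Since $\Omega$ is smooth and $\lambda<\lambda^{\ast}$, the minimal solution $\vp$ is a smooth classical solution, so $v\in C^\infty(\overline\Omega)$ and $g$ is continuous on the compact set $\overline\Omega$. We already know $g\ge 0$ by the Corollary, and $g\ge\delta_0>0$ on $\partial\Omega$ by Lemma~\ref{lemma-boundary-estimate-for-power-convexity}; hence it suffices to prove that $g$ cannot vanish at an interior point, after which compactness yields $c_1:=\min_{\overline\Omega}g>0$.

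The mechanism is a differential identity for the smallest second directional derivative. Fix an interior candidate $x_0$ with minimizing direction $\overline{\alpha}$, freeze $\overline{\alpha}$, and differentiate $v\La v-|\nabla v|^2=\frac{\lambda}{2}$ twice along $e_{\overline{\alpha}}$. Substituting $\La v_{\overline{\alpha}}$ from the once-differentiated equation and $\La v$ from the equation itself, and completing the square, I expect to reach
\[
v\La v_{\overline{\alpha}\overline{\alpha}}-2\nabla v\cdot\nabla v_{\overline{\alpha}\overline{\alpha}}+(\La v)\,v_{\overline{\alpha}\overline{\alpha}}=2\left|\nabla v_{\overline{\alpha}}-\tfrac{v_{\overline{\alpha}}}{v}\nabla v\right|^2+\frac{\lambda\,v_{\overline{\alpha}}^2}{v^2}\ge 0,
\]
which is the analogue for our problem of equation~(3.12) of \cite{LV}. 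This exhibits $v_{\overline{\alpha}\overline{\alpha}}$, and after minimizing over directions the function $g$, as governed by a uniformly elliptic linear operator with bounded coefficients (ellipticity and bounds being guaranteed on compact subsets of $\Omega$ where $v$ is bounded away from $0$, exactly as in the reduction in Lemma~\ref{lemma-convexity-for-the-general-case-0}). If $g(x_0)=0$ at an interior minimum, then $e_{\overline{\alpha}}$ is a null eigenvector, so that $\nabla v_{\overline{\alpha}}(x_0)=D^2v(x_0)\,e_{\overline{\alpha}}=0$; I would feed this, together with the sign of the right-hand side and the strong minimum principle and Hopf lemma, into the usual touching argument to force $g\equiv 0$ on the connected component of $x_0$, contradicting $g\ge\delta_0>0$ on $\partial\Omega$. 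The quantitative constant $c_1$, and its dependence only on the shape of $\Omega$, I would then extract from the Hopf-lemma quantification of this dichotomy: the boundary gap $\delta_0$ (geometric, controlled by $\partial\Omega$ through its curvatures and the constant $K$) combined with the a priori $C^2$ bounds for $v$ on compact subsets converts into an interior lower bound determined by $\Omega$ alone.

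The hard part will be the direction-dependence. The function $g=\lambda_{\min}(D^2v)$ is only Lipschitz where eigenvalues collide, and the square term $2|\nabla v_{\overline{\alpha}}-\frac{v_{\overline{\alpha}}}{v}\nabla v|^2+\frac{\lambda v_{\overline{\alpha}}^2}{v^2}$ on the right is nonnegative, so a naive one-point evaluation of the frozen identity does not by itself close the argument. I would handle this precisely as in \cite{LV}, exploiting the concavity of $\lambda_{\min}$ in the Hessian (which supplies the favorable negative third-order correction once the minimizing direction is allowed to rotate) to upgrade the frozen-direction identity into a genuine supersolution inequality for $g$, and using the vanishing $\nabla v_{\overline{\alpha}}(x_0)=0$ at the contact point to control the source. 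As an alternative that stays entirely within the parabolic framework of the paper, I could instead propagate strict convexity along the flow: choosing initial data with $D^2(e^{-\frac12 u_0})\ge c_1\mathbf{I}$ and rerunning the barrier argument of Lemma~\ref{lemma-convexity-for-the-general-case-0} for $\inf(w_{\beta\beta}-c_1)$, where the lateral boundary floor $\delta_0$ of Lemma~\ref{lemma-boundary-estimate-for-power-convexity} keeps the shifted quantity nonnegative on the parabolic boundary, and then pass to the limit $t\to\infty$ via the Approximation Lemma~\ref{lemma-Approximation-Lemma}.
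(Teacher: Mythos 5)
Your high-level plan is consistent with what the paper intends: the paper gives no detailed proof at all, only the observation that $\overline{w}=f(\vp)^2=e^{-\vp}$ satisfies $\overline{w}\La\overline{w}-|\nabla\overline{w}|^2-\lambda\overline{w}=0$, an equation of the same form as (3.12) of \cite{LV}, followed by an appeal to ``an argument similar to the proof of Lemma 3.6 in \cite{LV}''. So combining the boundary strictness of Lemma \ref{lemma-boundary-estimate-for-power-convexity} with the interior non-strict convexity and a propagation argument is indeed the intended route. However, the concrete mechanism you supply does not close, and the gap sits exactly at the step you flagged. Your differential identity is correct, but it exhibits the frozen function $h:=v_{\overline{\alpha}\overline{\alpha}}$ as a \emph{subsolution}: $v\La h-2\nabla v\cdot\nabla h+(\La v)h\ge 0$. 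The strong maximum principle for subsolutions excludes interior \emph{maxima}; it says nothing about interior \emph{minima}, and a nonnegative subsolution can perfectly well have an interior zero minimum ($|x|^2$ is subharmonic). Concretely, at your contact point, using $\nabla v_{\overline{\alpha}}(x_0)=0$ and $\nabla v_{\overline{\alpha}\overline{\alpha}}(x_0)=0$, the identity reduces to $\La\bigl(v_{\overline{\alpha}\overline{\alpha}}\bigr)(x_0)=\frac{v_{\overline{\alpha}}^2}{v^3}\left(2|\nabla v|^2+\lambda\right)\ge 0$, which is precisely what an interior minimum permits: there is nothing to contradict, and no Hopf-type argument applies to an interior touching point of a subsolution. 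To run a strong \emph{minimum} principle and force $h\equiv 0$ you would need $v\La h-2\nabla v\cdot\nabla h+(\La v)h\le 0$ near the degeneracy, i.e.\ the reverse sign of your right-hand side.

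Neither of your proposed repairs fills this hole. The concavity of $\lambda_{\min}$ makes $g=\lambda_{\min}(D^2v)$ an infimum of \emph{subsolutions}, which inherits no usable inequality (it is infima of supersolutions that remain supersolutions); what is actually required is the constant-rank/microscopic-convexity machinery in the spirit of Caffarelli--Friedman \cite{CF} and Korevaar--Lewis, where the second-order minimality in the \emph{direction} variable is used to control the mixed third derivatives and to produce a supersolution inequality for $g$ modulo terms vanishing at the degeneracy; this is the genuine content hiding behind \cite[Lemma 3.6]{LV}, and your proposal leaves it as a black box. The parabolic alternative also fails as stated: the contradiction in Lemma \ref{lemma-convexity-for-the-general-case-0} works only because the touching level is zero, so that every term on the right of \eqref{eq-w-1-after-first-calculation-after-reducing} carries a factor $w_{\overline{\alpha}\overline{\alpha}}=-\epsilon\psi$ or $w_{\overline{\alpha}\overline{\alpha}}^2=\epsilon^2\psi^2$ and the whole right-hand side is $O(\epsilon)$; if you rerun it for $w_{\beta\beta}-c_1$, the touching level becomes $c_1-\epsilon\psi$ and the right-hand side acquires terms of size $O(c_1)$ with no definite sign (schematically $Kc_1+Lc_1^2$ with $K,L$ merely bounded), which cannot be defeated by $-\epsilon\psi'$ as $\epsilon\to 0$. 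So the proposal, as written, has a genuine gap at its central step.
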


\section{Boundary Condition for the Geometric Properties}
\setcounter{equation}{0}
\setcounter{thm}{0}

Through the previous sections, we investigated the $f$-convexity of the minimal solution of Gelfand's problem, \eqref{eq-main-1}, under the assumption that the minimal solution has the property \eqref{boundary-condition-of-gelfand-with-PME} which we call \textbf{boundary condition}. As mentioned before, if we have the boundary condition removed from our setting, it is very difficult to get the $f$-convexity of the minimal solution of \eqref{eq-main-1} on the boundary. Thus, we had no choice but to add the boundary condition \eqref{boundary-condition-of-gelfand-with-PME} for the result. However, the problem is that we couldn't guarantee the existence of the minimal solution of \eqref{eq-main-1} having the boundary condition \eqref{boundary-condition-of-gelfand-with-PME}. Therefore, we need to check whether it is possible to satisfy conditions, \eqref{eq-main-1} and \eqref{boundary-condition-of-gelfand-with-PME}, simultaneously. Otherwise, it is meaningless to apply our geometric results in physical models.\\
\indent Before we finish this work, we will introduce the existence of solution to Gelfand's problem \eqref{eq-main-1} having the boundary condition \eqref{boundary-condition-of-gelfand-with-PME}  in this last section of the paper. We are now ready to state and prove the main result in this section.

\begin{lemma}\label{lem-boundary-condition-on-a-ball-}
Let $\Omega$ be a ball with radius $r>0$, i.e., $\Omega=B_{r}$, and let $\lambda^{\ast}(B_r)$ be the extremal value of the Gelfand's problem \eqref{eq-main-1} in a ball $B_r$. Then, there exists $\overline{\lambda}=\overline{\lambda}(B_r)$ with $0<\overline{\lambda}\leq \lambda^{\ast}(B_r)$ such that 
\begin{equation}\label{eq-boundary-condition-of-general-solution-3}
G(\vp_{\lambda},\lambda,B_r)>0 \qquad \forall 0<\lambda<\overline{\lambda}
\end{equation}
where $\vp_{\lambda}$ is the minimal solution of the Gelfand's problem in $\Omega=B_r$. Here, the constant $\overline{\lambda}$ is depending on the radius $r$ and, by scaling arguments, is equal to $\frac{\overline{\lambda}(B_1)}{r^2}$, i.e., $\overline{\lambda}(B_r)=\frac{\overline{\lambda}(B_1)}{r^2}$.
\end{lemma}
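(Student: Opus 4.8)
The plan is to reduce everything to a one–dimensional computation by using the rotational symmetry of the ball, and then to read off the sign of $G$ from the small-$\lambda$ behaviour of the minimal solution. First I would recall that, since $B_r$ is a ball, the minimal solution $\vp_\lambda$ is radially symmetric and radially decreasing; this follows from the uniqueness of the minimal solution together with the rotational invariance of \eqref{eq-main-1}, or directly from the radial ODE $\vp''+\frac{n-1}{\rho}\vp'+\lambda e^{\vp}=0$. Writing $\vp_\lambda=\vp_\lambda(\rho)$ with $\rho=|x|$, the outer normal derivative on $\partial B_r$ is $\left(\vp_\lambda\right)_\nu=\vp_\lambda'(r)$, which is strictly negative by Hopf's lemma, and the mean curvature is $H(\partial B_r)=\tfrac1r$. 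Hence $G$ collapses to the explicit scalar expression
\[
G(\vp_\lambda,\lambda,B_r)=\tfrac12\vp_\lambda'(r)^2+\lambda+\Big(\tfrac{n-1}{r}+K\Big)\vp_\lambda'(r),
\]
so the whole question becomes one about the single number $\vp_\lambda'(r)$.

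Next I would analyse this quantity as $\lambda\to 0^+$. Since the family $\vp_\lambda$ depends smoothly on $\lambda$ and $\vp_0\equiv 0$, the first variation $v_1=\partial_\lambda\vp_\lambda\big|_{\lambda=0}$ solves $\La v_1+1=0$ in $B_r$ with $v_1=0$ on $\partial B_r$, which on the ball is the explicit function $v_1=\frac{r^2-\rho^2}{2n}$. Consequently $\vp_\lambda'(r)=-\frac{\lambda r}{n}+O(\lambda^2)$. Substituting into the displayed formula, the square term is $O(\lambda^2)$ and
\[
G(\vp_\lambda,\lambda,B_r)=\frac{\lambda}{n}\big(1-Kr\big)+O(\lambda^2).
\]
Thus $G$ is governed to leading order by the competition between the positive contribution $\lambda$ and the negative contributions from the curvature term $\tfrac{n-1}{r}\vp_\lambda'(r)$ and the term $K\vp_\lambda'(r)$. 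Once the leading coefficient is seen to be positive, continuity of $\lambda\mapsto G(\vp_\lambda,\lambda,B_r)$ on $(0,\lambda^{\ast})$ lets me define $\overline{\lambda}$ as the supremum of all $\tilde\lambda\le\lambda^{\ast}(B_r)$ for which $G>0$ on $(0,\tilde\lambda)$, giving $0<\overline{\lambda}\le\lambda^{\ast}(B_r)$.

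Finally I would establish the scaling identity $\overline{\lambda}(B_r)=\overline{\lambda}(B_1)/r^2$. The change of variables $y=x/r$ turns the minimal solution on $B_r$ into the minimal solution on $B_1$ at the rescaled parameter, namely $\vp_\lambda^{B_r}(x)=\vp_{r^2\lambda}^{B_1}(x/r)$, because $\La_x \vp_\lambda^{B_r}=r^{-2}\La_y\vp_{r^2\lambda}^{B_1}$ matches $-\lambda e^{\vp}$ precisely when the parameter on $B_1$ is $r^2\lambda$; in particular $\lambda^{\ast}(B_r)=\lambda^{\ast}(B_1)/r^2$. Under this rescaling $\vp_\lambda'(r)=\frac1r(\vp_{r^2\lambda}^{B_1})'(1)$ and $H(\partial B_r)=\tfrac1r H(\partial B_1)$, so every term of $G$ carries a common factor $r^{-2}$ and $G(\vp_\lambda^{B_r},\lambda,B_r)=r^{-2}G(\vp_{r^2\lambda}^{B_1},r^2\lambda,B_1)$ with the constant $K$ replaced by $Kr$. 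The two positivity conditions are therefore equivalent under the correspondence $\lambda\leftrightarrow r^2\lambda$, which yields the claimed relation for the threshold.

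The main obstacle is the second step: verifying that the leading coefficient in the expansion of $G$ is genuinely positive. This is delicate because $\vp_\lambda'(r)\to 0$ as $\lambda\to0$, so the curvature and $K$ terms enter at the same order $\lambda$ as the good term $+\lambda$, and the sign hinges on the sharp rate $\vp_\lambda'(r)\sim -\lambda r/n$ together with the size of $K$ relative to $1/r$ (equivalently, on the dimensionless quantity $Kr$). Controlling the $O(\lambda^2)$ remainder uniformly, and reconciling the admissible range of $K$ coming from Lemma \ref{lemma-boundary-estimate-for-power-convexity} with the requirement that this leading coefficient stay positive, is the crux of the argument.
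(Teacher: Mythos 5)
Your reduction to the scalar quantity $\vp_\lambda'(r)$, and your scaling argument at the end, are sound and consistent with the paper (which also only sketches the scaling, via $M_{\lambda r^2,1}=M_{\lambda,r}$). But the proposal stops exactly where the lemma still has to be proved: you arrive at $G=\frac{\lambda}{n}(1-Kr)+O(\lambda^2)$ and then declare that the positivity of the leading coefficient --- i.e.\ whether $Kr<1$ --- is ``the crux,'' without resolving it. The missing idea is that on a ball the constant $K$ may be taken equal to zero. The term $Ku_\nu$ in \eqref{boundary-condition-of-gelfand-with-PME} exists only to dominate the mixed second derivatives $u_{\tau\nu}$ (via Lemma \ref{lem-estimate-of-mixed-second-derivatives-by-first-normal-direction} and the choice $K\geq C_1^2/(4|f_{\alpha_\tau\alpha_\tau}(0)|)$ in Lemma \ref{lemma-boundary-estimate-for-power-convexity}); since the minimal solution of \eqref{eq-main-1} on $B_r$ is unique and hence radially symmetric, $\vp_{\lambda,\tau_i\nu}\equiv 0$ on $\partial B_r$, so one may take $K=0$. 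This is precisely the paper's first observation, after which the leading coefficient is $\frac1n>0$ unconditionally and the competition you worry about disappears.

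Second, even granting $K=0$, your control of the error is not yet a proof: you invoke smooth dependence of $\vp_\lambda$ on $\lambda$ and an unquantified $O(\lambda^2)$ remainder, and then define $\overline{\lambda}$ as a supremum whose positivity requires exactly the uniform remainder bound you have not established. The paper avoids the expansion altogether. It traps $\vp_\lambda$ between the explicit barriers $\theta_\lambda=\frac{\lambda}{2n}(r^2-|x|^2)$ and $\overline{\theta}_\lambda=\frac{\lambda e^{M_{\lambda,r}}}{2n}(r^2-|x|^2)$, where $M_{\lambda,r}=\|\vp_\lambda\|_{L^\infty(B_r)}$, using the maximum principle and $1\le e^{\vp_\lambda}\le e^{M_{\lambda,r}}$; this yields the two-sided bound $-\frac{\lambda e^{M_{\lambda,r}}r}{n}\le \vp_{\lambda,\nu}\le -\frac{\lambda r}{n}$ on $\partial B_r$, valid for every $\lambda$ and not just asymptotically. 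It then treats $\overline{G}=\vp_{\lambda,\nu}^2+\frac{2(n-1)}{r}\vp_{\lambda,\nu}+2\lambda$ as a quadratic in $\vp_{\lambda,\nu}$ and checks by elementary algebra, using that $e^{M_{\lambda,r}}<\frac{n}{n-1}$ once $\lambda$ is small (since $\vp_\lambda\to 0$ uniformly as $\lambda\to 0$), that the barrier bound places $\vp_{\lambda,\nu}$ strictly between the larger root and $0$, where the quadratic is positive; this produces the explicit threshold $\overline{\lambda}=\min\{\lambda_1,\frac{(n-1)^2}{2r^2},\frac{n(n-1)}{e^{M_{\lambda,r}}r^2}\}$. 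To repair your argument you would need to (i) justify $K=0$ by radial symmetry, and (ii) replace the formal first-variation expansion by a quantitative bound on $\vp_{\lambda,\nu}$, for instance the barrier estimate above.
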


\begin{proof}
For the convenient, we assume in this proof that $B_r=B_r(0)$ a ball of radius $r$ which is centered at $0$. For the minimal solution $\vp_{\lambda}$, let $M_{\lambda,r}$ be such that
\begin{equation}\label{eq-definition-of-M-ladmbda-r-in-a-ball}
M_{\lambda,r}=\left\|\vp_{\lambda}\right\|_{L^{\infty}(B_r)}.
\end{equation}
Consider the scaled function
\begin{equation*}
\overline{\vp}(x)=\vp_{\lambda}(rx).
\end{equation*}
Then, $\overline{\vp}$ is a function defined on the ball $B_1$ and satisfies
\begin{equation*}
\La\overline{\vp}+\lambda r^2e^{\overline{\vp}}=0 \qquad \mbox{in $B_1$}.
\end{equation*}
Thus, by \eqref{eq-definition-of-M-ladmbda-r-in-a-ball},
\begin{equation*}
M_{\lambda r^2,1}=\left\|\overline{\vp}\right\|_{L^{\infty}(B_1)}=\left\|\vp_{\lambda}\right\|_{L^{\infty}(B_r)}=M_{\lambda,r}.
\end{equation*}
Note that the minimal solution $\vp_{\lambda}$ converges to $0$ uniformly as $\lambda\to 0$. Thus, we can choose a constant $\lambda_1>0$ such that
\begin{equation}\label{eq-condition-of-special-constant-lambda-1-30}
e^{M_{\lambda,r}}<\frac{n}{n-1}\qquad \forall 0<\lambda<\lambda_1.
\end{equation}
Define the functions $\theta_{\lambda}, \overline{\theta}_{\lambda}: B_r\to\R$ by $\theta_{\lambda}(x)=\frac{\lambda}{2n}\left(r^2-|x|^2\right)$, $\overline{\theta}_{\lambda}(x)=\frac{\lambda e^{M_{\lambda,r}}}{2n}\left(r^2-|x|^2\right)$, respectively. Then, by a direct computation
\begin{equation*}
\La\theta_{\lambda}(x)=-\lambda \qquad \mbox{and} \qquad \La\overline{\theta}_{\lambda}(x)=-\lambda e^{M_{\lambda,r}}.
\end{equation*} 
Then, by the maximum principle for the super-harmonic function, we have 
\begin{equation*}
\theta_{\lambda}\leq \vp_{\lambda}\leq \overline{\theta}_{\lambda} \quad  \mbox{in $B_r$} \qquad \mbox{and} \qquad \theta_{\lambda}=\vp_{\lambda}=\overline{\theta}_{\lambda} \quad \mbox{on $\partial B_r$}.
\end{equation*}
This immediately implies that
\begin{equation}\label{eq-compare-of-nomal-derivatives-of-solution-with-theta}
-\frac{\lambda e^{M_{\lambda,r}}r}{n}\leq {\vp}_{\lambda,\nu}\leq -\frac{\lambda r}{n} \qquad \mbox{on $\partial B_r$}
\end{equation} 
where $\nu$ is the outer normal vector at the boundary $\partial B_{r}$.\\
\indent Since the minimal solution $\vp_{\lambda}$ is unique and is defined in a ball, it is radially symmetric. Thus, 
\begin{equation*}
\vp_{\lambda,\tau_i\nu}=0 \quad \mbox{on $\partial B_r$} \quad \forall i=1,\cdots, n \qquad \Rightarrow \qquad K=0 
\end{equation*}
where $\tau_i$, $(i=1,\cdots,n)$, are tangential directions and $K$ is the constant in the boundary condition \eqref{boundary-condition-of-gelfand-with-PME}. Combining this with \eqref{boundary-condition-of-gelfand-with-PME}, we have
\begin{equation*}
G(\vp_{\lambda},\lambda,B_r)=\frac{1}{2}\vp_{\lambda,\nu}^2+\frac{n-1}{r}\vp_{\lambda,\nu}+\lambda=\frac{1}{2}\left(\vp_{\lambda,\nu}^2+\frac{2(n-1)}{r}\vp_{\lambda,\nu}+2\lambda\right)=:\frac{1}{2}\overline{G}.
\end{equation*}
Observe that $\overline{G}$ can be referred to as a quadratic polynomial with respect to $\vp_{\lambda}$. Suppose that $\lambda<\frac{(n-1)^2}{2r^2}$. Then, $\overline{G}$ is strictly positive if
\begin{equation*}
\vp_{\lambda,\nu}<-\frac{n-1}{r}-\sqrt{\frac{(n-1)^2}{r^2}-2\lambda} \qquad \mbox{or} \qquad -\frac{n-1}{r}+\sqrt{\frac{(n-1)^2}{r^2}-2\lambda}<\vp_{\lambda,\nu}<0.
\end{equation*}
By \eqref{eq-compare-of-nomal-derivatives-of-solution-with-theta}, it suffices to show that there exists a constant $\overline{\lambda}>0$ such that
\begin{equation}\label{eq-wanna-of-vp-for-boundary-condition}
-\frac{n-1}{r}+\sqrt{\frac{(n-1)^2}{r^2}-2\lambda}<-\frac{\lambda e^{M_{\lambda,r}}r}{n} \qquad \forall 0<\lambda<\overline{\lambda}.
\end{equation}
Simple computation gives us that
\begin{equation}\label{eq-need-1-of-vp-for-boundary-condition}
\frac{n-1}{r}-\frac{\lambda e^{M_{\lambda,r}}r}{n}>0 \qquad \mbox{if $\lambda<\frac{n(n-1)}{e^{M_{\lambda,r}}r^2}$}
\end{equation}
and, by \eqref{eq-condition-of-special-constant-lambda-1-30},
\begin{equation}\label{eq-need-2-of-vp-for-boundary-condition}
\left(\frac{n-1}{r}-\frac{\lambda e^{M_{\lambda,r}}r}{n}\right)^2>\frac{(n-1)^2}{r^2}-2\lambda \qquad \mbox{if $\lambda<\lambda_1$}.
\end{equation}
Let's define the constant $\overline{\lambda}>0$ by 
\begin{equation*}
\overline{\lambda}=\min\left\{\lambda_1, \frac{(n-1)^2}{2r^2}, \frac{n(n-1)}{e^{M_{\lambda,r}}r^2}\right\}.
\end{equation*}
Then, by \eqref{eq-need-1-of-vp-for-boundary-condition} and \eqref{eq-need-2-of-vp-for-boundary-condition}, \eqref{eq-wanna-of-vp-for-boundary-condition} holds for $0<\lambda<\overline{\lambda}$ and the lemma follows.
\end{proof}
We give next the generalization of the previous lemma for a strictly convex bounded domain $\Omega$ with smooth boundary.
For $x\in\partial\Omega$, we denote by 
\begin{equation*}
r_{\Omega}(x)=\sup\{r\in\R: B_r\subset\Omega \,\, \mbox{and} \,\, B_r\cap\partial\Omega=\{x\}\}\qquad \mbox{and} \qquad r_{\Omega}=\inf_{x\in\partial\Omega}r_{\Omega}(x).
\end{equation*}
We also denote by
\begin{equation*}
R_{\Omega}(x)=\inf\{R\in\R: \Omega\subset B_R \,\, \mbox{and} \,\, B_R\cap\partial\Omega=\{x\}\}\qquad \mbox{and} \qquad R_{\Omega}=\sup_{x\in\partial\Omega}R_{\Omega}(x).
\end{equation*}
Then, we easily conclude that
\begin{equation*}
H(\partial\Omega)\leq \frac{1}{r_{\Omega}} 
\end{equation*}
and 
\begin{equation*}
-\frac{\lambda e^{\overline{M}_{\lambda}} R_{\Omega}}{n}\leq \vp_{\lambda}\leq-\frac{\lambda r_{\Omega}}{n} \qquad \mbox{on $\partial\Omega$}, \qquad \overline{M}_{\lambda}=\left\|\vp_{\lambda}\right\|_{L^{\infty}(\Omega)}
\end{equation*}
for the minimal solution $\vp_{\lambda}$ of the Gelfand's problem \eqref{eq-main-1}. Moreover, by Lemma \ref{lem-estimate-of-mixed-second-derivatives-by-first-normal-direction}, Lemma \ref{lemma-boundary-estimate-for-power-convexity} and Lemma \ref{lem-boundary-condition-on-a-ball-}, one can easily checked that the constant $K$ in the boundary condition \eqref{boundary-condition-of-gelfand-with-PME} is closely related to the difference $R_{\Omega}-r_{\Omega}$ and 
\begin{equation*}
K\to 0 \qquad \mbox{as $\left|R_{\Omega}-r_{\Omega}\right|\to 0$}
\end{equation*}
Hence, following the similar computation as in the proof of Lemma \ref{lem-boundary-condition-on-a-ball-}, we can easily extend the results for the boundary condition on a ball to more general boundary setting.\\
\indent We finish this work with stating the following result.
\begin{thm}
 Let $\Omega$ be a strictly convex bounded subset of $\R^n$ and let $\lambda^{\ast}(\Omega)$ be the extremal value of the Gelfand's problem \eqref{eq-main-1} in $\Omega$. Then, there exists a constant $\overline{\lambda}$ with $0<\overline{\lambda}\leq \lambda^{\ast}(\Omega)$ which is depending on the difference $|R_{\Omega}-r_{\Omega}|$ such that 
 \begin{equation*}
G(\vp_{\lambda},\lambda,\Omega)>0 \qquad \forall 0<\lambda<\overline{\lambda}
\end{equation*}
where $\vp_{\lambda}$ is the minimal solution of the Gelfand's problem in $\Omega$.
\end{thm}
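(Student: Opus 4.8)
The plan is to follow the proof of Lemma~\ref{lem-boundary-condition-on-a-ball-} almost verbatim, replacing the explicit radial barriers available on a ball by barriers built from the inscribed and circumscribed balls $B_{r_{\Omega}}$ and $B_{R_{\Omega}}$. Writing $\vp_{\lambda,\nu}=-a$ with $a>0$ (recall $\vp_{\lambda}>0$ in $\Omega$ and vanishes on $\partial\Omega$, so the outer normal derivative is negative), the boundary quantity reads
\begin{equation*}
G(\vp_{\lambda},\lambda,\Omega)=\tfrac12 a^{2}+\lambda-(n-1)aH(\partial\Omega)-Ka,
\end{equation*}
a quadratic in $a$. As in the ball case, I would control $G$ through three ingredients: a two-sided bound on the normal derivative $a=-\vp_{\lambda,\nu}$, the curvature bound $H(\partial\Omega)\leq 1/r_{\Omega}$, and a bound on the constant $K$.

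For the normal derivative I would use the comparison functions already recorded before the statement. Fix $x_{0}\in\partial\Omega$ and let $B_{r_{\Omega}}$ (resp. $B_{R_{\Omega}}$) be the inscribed (resp. circumscribed) ball touching $\partial\Omega$ at $x_{0}$. Since $0\leq\vp_{\lambda}\leq\overline{M}_{\lambda}$ gives $-\lambda e^{\overline{M}_{\lambda}}\leq\La\vp_{\lambda}\leq-\lambda$, the paraboloids
\begin{equation*}
\theta(y)=\frac{\lambda}{2n}\left(r_{\Omega}^{2}-|y-c_{r}|^{2}\right),\qquad \overline{\theta}(y)=\frac{\lambda e^{\overline{M}_{\lambda}}}{2n}\left(R_{\Omega}^{2}-|y-c_{R}|^{2}\right)
\end{equation*}
serve, respectively, as a lower barrier for $\vp_{\lambda}$ on $B_{r_{\Omega}}$ and an upper barrier on $\Omega$, and the maximum principle together with the tangency at $x_{0}$ yields
\begin{equation*}
-\frac{\lambda e^{\overline{M}_{\lambda}}R_{\Omega}}{n}\leq\vp_{\lambda,\nu}(x_{0})\leq-\frac{\lambda r_{\Omega}}{n},
\end{equation*}
exactly as in \eqref{eq-compare-of-nomal-derivatives-of-solution-with-theta}. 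The curvature bound $H(\partial\Omega)\leq 1/r_{\Omega}$ is immediate from the inscribed ball, and Lemma~\ref{lem-estimate-of-mixed-second-derivatives-by-first-normal-direction} together with Lemma~\ref{lemma-boundary-estimate-for-power-convexity} bounds the mixed derivatives $\vp_{\lambda,\tau_i\nu}$ by $C|\vp_{\lambda,\nu}|$, hence fixes an admissible $K$; the key qualitative fact I would extract is that this $K$ tends to $0$ as $|R_{\Omega}-r_{\Omega}|\to0$, since the circumscribed and inscribed balls then coincide and the situation degenerates to the radially symmetric one where $K=0$.

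Finally I would run the same asymptotic analysis in $\lambda$. Because $\vp_{\lambda}\to0$ uniformly as $\lambda\to0$ we have $\overline{M}_{\lambda}\to0$ and $e^{\overline{M}_{\lambda}}\to1$, so $a=-\vp_{\lambda,\nu}=O(\lambda)$ and the term $\tfrac12 a^{2}=O(\lambda^{2})$ is negligible; factoring out $\lambda$, positivity of $G$ reduces to
\begin{equation*}
1-(n-1)\frac{a}{\lambda}H(\partial\Omega)-K\frac{a}{\lambda}>0,
\end{equation*}
and inserting the bounds above it suffices that $\frac{(n-1)e^{\overline{M}_{\lambda}}R_{\Omega}}{n\,r_{\Omega}}+\frac{K e^{\overline{M}_{\lambda}}R_{\Omega}}{n}<1$. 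Defining $\overline{\lambda}$ as the minimum of $\lambda^{\ast}(\Omega)$ and the finitely many explicit thresholds that make the analogues of \eqref{eq-need-1-of-vp-for-boundary-condition}--\eqref{eq-need-2-of-vp-for-boundary-condition} and the displayed inequality hold would complete the argument. The main obstacle, and the only genuine difference from Lemma~\ref{lem-boundary-condition-on-a-ball-}, is precisely this last inequality: on a ball $R_{\Omega}=r_{\Omega}$ and $K=0$ give the clean margin $1-\frac{n-1}{n}=\frac1n>0$, whereas for a general $\Omega$ both the ratio $R_{\Omega}/r_{\Omega}$ and the constant $K$ enter, so one must quantify ``$K\to0$ and $R_{\Omega}/r_{\Omega}\to1$ as $|R_{\Omega}-r_{\Omega}|\to0$'' carefully enough to keep the left side below $1$. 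This is exactly what forces $\overline{\lambda}$ to depend on $|R_{\Omega}-r_{\Omega}|$.
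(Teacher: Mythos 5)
Your proposal follows essentially the same route as the paper: the paper establishes this theorem only through the sketch immediately preceding it --- the inscribed/circumscribed-ball bounds $-\frac{\lambda e^{\overline{M}_{\lambda}}R_{\Omega}}{n}\leq \vp_{\lambda,\nu}\leq -\frac{\lambda r_{\Omega}}{n}$, the curvature bound $H(\partial\Omega)\leq 1/r_{\Omega}$, the claim that $K\to 0$ as $|R_{\Omega}-r_{\Omega}|\to 0$, and then ``the similar computation as in the proof of Lemma \ref{lem-boundary-condition-on-a-ball-}'' --- which is exactly the argument you give. The obstacle you single out at the end (quantifying $K\to 0$ and $R_{\Omega}/r_{\Omega}\to 1$ so that the sufficient inequality stays below $1$) is precisely the step the paper dismisses with ``one can easily check,'' so your write-up matches the paper's approach while being more explicit about where the remaining work lies.
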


{\bf Acknowledgement} Ki-Ahm Lee was supported by the Korea Research Foundation
Grant funded by the Korean Government(MOEHRD, Basic Research Promotion
Fund)( KRF-2008-314-C00023).


\begin{thebibliography}{99}







 
\bibitem[BV]{BV} {\sc H. Brezis, J.L. V\'azquez}, {\it Blow-up solutions of some nonlinear elliptic problems,} Rev. Mat. Univ. Compl. Madrid 10 (1997) 443-469





\bibitem[Ch]{Ch} S. Chandrasekhar, {\it An introduction to the study of stellar structure,} Dover Publ. Inc. 1985

\bibitem[CF]{CF} L.A. Caffarelli, A. Friedmann {\it Convexity of solutions of some semilinear elliptic equations}, Duke Math. J. 52, (1985), 431-455

\bibitem[CLMP]{CLMP} E. Caglioti, P.L. Lions, C. Marchioro and M. Pulvirenti, {\it A spectial class of stationary flows for two-dimensional Euler equations}













\bibitem[Ge]{Ge} I.M. Gelfand, {\it Some problems in the theory of quasi-linear equations,} Section 15, due to G.I. Barenblatt, American Math. Soc. Transl, 29 (1963), 295-381; Russian orginal: Uspekhi Mat. Nauk 14 (1959), 87-158




















\bibitem[KC]{KC} H.B. Keller, D.S. Cohen, {\it Some positone problems suggested by nonlinear heat generation} J. Math. Mech. 16 (1967), 1361-1376

\bibitem[Ko1]{Ko1} { \sc N.J. Korevaar}, {\it Convex solutions to nonlinear elliptic and parabolic boundary value problems. } Indiana Univ. Math. J.{\bf 32} (1983), no. 4, 603--614.

\bibitem[Ko2]{Ko2} {\it Convexity properties of solutions to elliptic P.D.E.'s} Variational Methods for free surface interfaces, Springer, (1987), 115-121






\bibitem[LSU]{LSU} O.A. Ladyzenskaya, V.A. Solonnikov and N.N. Uraltceva, {\it Linear and Quasilinear Equations of Parabolic Type,} Transl. Math. Mono. vol. 23, Amer. Math. Soc., Providence, R.L., USA, 1968

\bibitem[LV]{LV} K.-A. Lee, J.L. V\'azquez {\it Parabolic approach to nonlinear elliptic eigenvalue problems}, Adv, Math. 219 (2008), no. 6, 2006-2028.



\bibitem[Ne]{Ne} G. Nedev, {\it Regularity of the extremal solution of semilinear elliptic equations} C.R. Acad. Sci. Paris, S\'er. I Math. 330 (2000) 997-1002




\end{thebibliography}
\end{document}